\newcommand\cyr{%
\renewcommand\rmdefault{wncyr}%
\renewcommand\sfdefault{wncyss}%
\renewcommand\encodingdefault{OT2}%
\normalfont
\selectfont}
\DeclareTextFontCommand{\textcyr}{\cyr}
\DeclareFontFamily{OT1}{rsfs}{}
\DeclareFontShape{OT1}{rsfs}{n}{it}{<-> rsfs10}{}
\DeclareMathAlphabet{\mathscr}{OT1}{rsfs}{n}{it}
\numberwithin{equation}{section}
\newtheorem{theorem}{Theorem}[section]
\newtheorem{lemma}[theorem]{Lemma}
\newtheorem{proposition}[theorem]{Proposition}
\newtheorem{corollary}[theorem]{Corollary}
\newtheorem{question}{Question}
\newtheorem{claim}{Claim}
\newtheorem{notation}[theorem]{Notation}
\theoremstyle{definition}
\newtheorem{definition}[theorem]{Definition}
\newtheorem{remark}[theorem]{Remark}
\theoremstyle{remark}
\newtheorem{example}[theorem]{Example}
\newtheorem{acknowledgement}{Acknowledgement}
\newcommand{\Ass}{\operatorname{Ass}}
\newcommand{\im}{\operatorname{Im}}
\renewcommand{\ker}{\operatorname{Ker}}
\newcommand{\grade}{\operatorname{grade}}
\newcommand{\Spec}{\operatorname{Spec}}
\newcommand{\cd}{\operatorname{cd}}
\newcommand{\Ht}{\operatorname{ht}}
\newcommand{\pd}{\operatorname{pd}}
\newcommand{\V}{\operatorname{V}}
\newcommand{\Supp}{\operatorname{Supp}}
\newcommand{\Att}{\operatorname{Att}}
\newcommand{\Ann}{\operatorname{Ann}}
\newcommand{\Char}{\operatorname{char}}
\newcommand{\depth}{\operatorname{depth}}
\newcommand{\Max}{\operatorname{Max}}
\newcommand{\Min}{\operatorname{Min}}
\newcommand{\lo}{\longrightarrow}
\newcommand{\fm}{\frak{m}}
\newcommand{\fp}{\frak{p}}
\newcommand{\fq}{\frak{q}}
\newcommand{\fa}{\frak{a}}
\newcommand{\fb}{\frak{b}}
\newcommand{\fn}{\frak{n}}
\begin{document}
\title[Surjectivity of some local cohomology map and   (SVT)]{Surjectivity of some local cohomology map and the second vanishing theorem}

\author[Asgharzadeh]{Mohsen Asgharzadeh}
\address{Hakimieh, 16599-19556, Tehran, Iran}
\email{mohsenasgharzadeh@gmail.com}

\author[Ishiro]{Shinnosuke Ishiro}
\address{Department of Mathematics, College of Humanities and Sciences, Nihon University, Setagaya-ku,
Tokyo 156-8550, Japan}
\email{shinnosukeishiro@gmail.com}

\author[Shimomoto]{Kazuma Shimomoto}
\address{Department of Mathematics, College of Humanities and Sciences, Nihon University, Setagaya-ku,
Tokyo 156-8550, Japan}
\email{shimomotokazuma@gmail.com}

\thanks{2020 {\em Mathematics Subject Classification\/}: 13A35, 13D45, 13H10}

\keywords{local cohomology, second vanishing theorem, surjective element}


\begin{abstract}
The second vanishing theorem has a long history in the theory of local cohomology modules, which connects the vanishing of a complete regular local ring with a topological property of the punctured spectrum of the ring under some conditions. However, the case of complete ramified regular local rings is unresolved. In this paper, we give a partial answer to the second vanishing theorem in the ramified case. Our proof is inspired by the theory of surjective elements in the theory of local cohomology.
\end{abstract}

\maketitle


\section{Introduction}

The second vanishing theorem (abbreviated as ``(SVT)") is an assertion of the vanishing of certain local cohomology modules on a regular local ring.
Hartshorne applied the cohomology of formal schemes to present an interesting proof of (SVT) in the case of polynomial rings, and he
asked   (SVT) for general regular local rings; see \cite[Page 445]{HV}.
The equal characteristic case is well-known. Ogus \cite{Ogu73} proved (SVT) in characteristic $0$, Hartshorne and Speiser \cite{HS77} proved it in characteristic $p>0$, and Huneke and Lyubeznik \cite{HL90} gave a uniform treatment in the equal characteristic case. Recently, Zhang \cite{Zha21} proved (SVT) for unramified regular local rings. Bhattacharyyat \cite{Bha20} partially proved it for ramified regular local rings. However, it is unresolved in general. The aim of this paper is to prove the following result, which is regarded as an analogue of (SVT).

\begin{theorem}
\label{svt}
Let $(R,\fm,k)$ be a $d$-dimensional complete regular local ring of mixed characteristic with separably closed residue field $k$. Assume that $I \subset R$ is a proper ideal with $\dim(R/\fp) \ge 3$ and $\ell_R\bigl(H^{2}_\fm(R/pR+\fp)\bigr)<\infty$ for any $\fp \in \Min(R/I)$. Then the following statements are equivalent.
\begin{enumerate}
\item
$H^{d-1}_I(R)=0$.

\item
The punctured spectrum $\Spec^\circ(R/I)$ is connected in the Zariski topology.
\end{enumerate}
\end{theorem}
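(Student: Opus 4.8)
The plan is to deduce the equivalence from the already-known cases of (SVT): the unramified mixed characteristic case (Zhang), the positive characteristic case (Huneke--Lyubeznik, after Hartshorne--Speiser), and the Hartshorne--Lichtenbaum vanishing theorem, which serves as the ``characteristic zero endpoint'' once $p$ is inverted. The implication $(1)\Rightarrow(2)$ is the classical easy direction and uses none of the extra hypotheses: assuming $\Spec^\circ(R/I)$ disconnected, and using that $V(I)$ itself is connected ($R/I$ being local), I would write $V(I)=V(J_1)\cup V(J_2)$ with $J_1,J_2$ radical, $V(J_1)\cap V(J_2)=\{\fm\}$, and $V(J_i)\neq\{\fm\}$; then $\sqrt{J_1+J_2}=\fm$, $\sqrt{J_1\cap J_2}=\sqrt I$, and the Mayer--Vietoris sequence for local cohomology contains
\[
H^{d-1}_{J_1}(R)\oplus H^{d-1}_{J_2}(R)\longrightarrow H^{d-1}_I(R)\longrightarrow H^{d}_\fm(R)\longrightarrow H^{d}_{J_1}(R)\oplus H^{d}_{J_2}(R).
\]
Since $\dim R/J_i\ge 1$ and $R$ is a complete domain, $H^{d}_{J_i}(R)=0$ by Hartshorne--Lichtenbaum, so $H^{d-1}_I(R)$ surjects onto $H^{d}_\fm(R)\neq 0$ and hence is nonzero.

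For $(2)\Rightarrow(1)$ I may assume $I$ radical. If $R$ is unramified this is Zhang's theorem \cite{Zha21}, so assume $R$ ramified. By the Cohen structure theorem I would write $R=T/(f)$ with $T$ a complete \emph{unramified} regular local ring of dimension $d+1$ with (separably closed) residue field $k$ and $f$ part of a regular system of parameters, so that $\overline{T}:=T/pT=k[[x_1,\dots,x_d]]$ is a $d$-dimensional regular local ring of characteristic $p$ with separably closed residue field. Let $\fa\subset T$ be the preimage of $I$: it is radical, contains $f$, satisfies $T/\fa=R/I$ (hence $\Spec^\circ(T/\fa)=\Spec^\circ(R/I)$), and its minimal primes are the preimages of those of $I$, with the same coheights $\ge 3$. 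Applying Zhang's theorem to the unramified ring $T$ (where $d=\dim T-1$) yields $H^{d}_\fa(T)=0$.

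Put $N:=H^{d-1}_\fa(T)$. Feeding $H^{d}_\fa(T)=0$ into the long exact sequences attached to $0\to T\xrightarrow{p}T\to\overline{T}\to 0$ and to $0\to T\xrightarrow{f}T\to R\to 0$ gives $N/pN\cong H^{d-1}_{\fa\overline{T}}(\overline{T})$ and $N/fN\cong H^{d-1}_I(R)$. The first module lives in the (SVT) degree for the regular ring $\overline{T}$ of characteristic $p$, and its minimal support has coheight $\ge 2$ (a minimal prime of $I+pR$ lies minimally over $\fp+pR$ for some $\fp\in\Min(I)$). Granting that $\Spec^\circ(\overline{T}/\fa\overline{T})=\Spec^\circ(R/(I+pR))$ is connected, the Huneke--Lyubeznik second vanishing theorem \cite{HL90} (building on \cite{HS77}) gives $H^{d-1}_{\fa\overline{T}}(\overline{T})=0$, i.e. multiplication by $p$ is surjective on $N$ --- $p$ is a surjective element for $(T,\fa)$ in degree $d-1$, in the sense alluded to in the abstract. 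Then $\Supp N$ avoids $V(p)$, so $N$ is also $p$-torsion free and $N\xrightarrow{\ \sim\ }N[1/p]=H^{d-1}_{\fa T[1/p]}(T[1/p])$; localizing the isomorphism $N\cong N[1/p]$ and using $H^{d}_\fa(T)[1/p]=0$ once more, I obtain
\[
H^{d-1}_I(R)\;\cong\;N/fN\;\cong\;N[1/p]/fN[1/p]\;\cong\;H^{d-1}_{IR[1/p]}(R[1/p]).
\]
Now $R[1/p]$ is a regular ring of Krull dimension $d-1$ and the right-hand side is its top local cohomology; it vanishes by Hartshorne--Lichtenbaum applied at each height $d-1$ maximal ideal $\fm'$, since the corresponding prime $\fm^*\subset R$ has $\dim R/\fm^*=1<3\le\dim R/\fp$ for every $\fp\in\Min(I)$, so $\fm^*$ is not minimal over $I$ and $\dim (R/I)_{\fm^*}\ge 1$. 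Hence $H^{d-1}_I(R)=0$.

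The step I expect to be the main obstacle is the one I granted above: showing that $\Spec^\circ(R/(I+pR))$ is connected, i.e. that cutting $R/I$ by the hyperplane $p$ does not disconnect the punctured spectrum. This is precisely where $\ell_R\bigl(H^2_\fm(R/(pR+\fp))\bigr)<\infty$ should be spent: it substitutes for Serre's condition $(S_2)$ on the punctured spectra of the ``pieces'' $R/(\fp+pR)$, which is what a Grothendieck--Faltings connectedness argument (via the connectedness-dimension formalism, tracking the finite length correction terms) needs in order to transport connectedness of $\Spec^\circ(R/I)$ across the hyperplane section. In the unramified case one works directly inside the genuinely regular ring $R/pR$ and this difficulty evaporates, which is why Zhang's theorem requires no comparable hypothesis; making this descent work under only the stated finite length assumption is the technical heart of the ramified case.
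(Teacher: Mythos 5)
Your easy direction $(1)\Rightarrow(2)$ is fine and matches the paper, as does your endgame of killing $H^{d-1}_{IR[1/p]}(R[1/p])$ by localizing at height $d-1$ primes and invoking Hartshorne--Lichtenbaum. The problem is the step you flagged as the ``main obstacle'': it is not a technical gap that the finite length hypothesis can fill, because the intermediate statement you need --- that $\Spec^{\circ}(R/(I+pR))$ is connected --- is false under the stated hypotheses. Take $R=W(k)[[x_1,\ldots,x_9]]$ (so $d=10$; the same configuration can be set up in a ramified ring), $\fq_1=(x_1,\ldots,x_5)$, $\fq_2=(x_1-x_2,\ x_6-p,\ x_7,x_8,x_9)$, and $I=\fq_1\cap\fq_2$. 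Each $R/\fq_i$ is a $5$-dimensional regular domain and each $R/(pR+\fq_i)$ is a $4$-dimensional regular local ring, so $\dim(R/\fq_i)\ge 3$ and $\ell_R\bigl(H^2_\fm(R/(pR+\fq_i))\bigr)=0<\infty$; moreover $R/(\fq_1+\fq_2)\cong W(k)$ has dimension $1$, so $\fq_1+\fq_2$ is not $\fm$-primary and $\Spec^{\circ}(R/I)$ is connected. Yet $\fq_1+\fq_2+pR=\fm$, so the two components of $V(I+pR)$ meet only at the closed point and $\Spec^{\circ}(R/(I+pR))$ is disconnected. The length hypothesis constrains each piece $R/(pR+\fq_i)$ separately but says nothing about the intersections $\fq_i+\fq_j$, so it cannot prevent two components from meeting along a curve on which $p$ is a parameter. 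In such cases $N/pN\cong H^{d-1}_{\fa\overline{T}}(\overline{T})\neq 0$ by the converse direction of the characteristic $p$ (SVT), and your argument stops, even though the conclusion $H^{d-1}_I(R)=0$ is still true. There is also a secondary flaw: even when $N/pN=0$, surjectivity of $p$ on $N$ gives $p$-divisibility but not $p$-torsion-freeness (consider $E_R(k)$), so the map $N\to N[1/p]$ need not be injective and the isomorphism $N/fN\cong N[1/p]/fN[1/p]$ does not follow; controlling the $p$-torsion of $N$ would require the vanishing of $H^{d-2}_{\fa\overline{T}}(\overline{T})$ as well.

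The paper sidesteps the mod-$p$ connectedness problem by first reducing to a single minimal prime: connectedness of the graph $\Theta_{R/I}$ lets one order the minimal primes so that Mayer--Vietoris and induction on their number reduce everything to $I=\fq$ prime. For a single prime, $R/\fq$ is a complete domain, and the Hochster--Huneke graph of $R/(pR+\fq)$ \emph{is} connected (Lemma 4.3), which is exactly the connectedness-after-cutting-by-$p$ statement that fails for general $I$. The finite length hypothesis is then spent not on connectedness but on Theorem 3.5: it makes $H^{d-2}_{\fp}(S)$ (where $S=k[[x_1,\ldots,x_d]]$ and $R/pR\cong S/\overline{f}S$) artinian, hence injective, hence divisible, so that multiplication by $\overline{f}$ is surjective and $H^{d-2}_{\fq}(R/pR)=0$, which in turn forces $H^{d-1}_\fq(R)=0$ by a $p$-torsion argument. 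If you want to keep your lift-to-$T$ strategy, you must carry out this prime-by-prime reduction first; applied to a single prime it could work, but then it essentially reproduces the paper's argument.
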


Here, $\ell_R(-)$ stands for the length function.
In \cite{HBPW18}, the above theorem is established in the unramified case under the assumption that $R/I$ is equidimensional instead of the finiteness condition on local cohomology modules.
In our setting, we need the condition on the finiteness of local cohomology modules and deal with unramified and ramified cases simultaneously. In order to prove Theorem 1.1, we show that a certain local cohomology map, induced by a multiplication with a suitable regular element in $R$, is surjective. For the precise statement, see Theorem \ref{surjective2}. This motivates us to discuss more on the surjectivity property of the multiplicative map $H^{i}_I(R) \xrightarrow{\times x} H^{i}_I(R)$, and open some related questions. We present some criteria for it (see Propositions \ref{regularseq}, \ref{surjective} and \ref{cr}). Some illustrative examples are also given.

Let us remark that the hypothesis in Theorem \ref{svt} applies to both unramified and ramified regular local rings.
We emphasize that this paper is guided by the following general question and we will see that the main theorem gives one such realization:

\begin{enumerate}
\item[$\bullet$]
Let $R$ be a ring of mixed characteristic $p>0$. Then $R/pR$ is a ring of prime characteristic. Can one find a ring $S$ of prime characteristic with a regular element $t \in S$ such that $S/tS \cong R/pR$ and $S$ inherits certain good properties from $R$?
\end{enumerate}

We construct some examples fitting into the setting of Theorem $\ref{svt}$. For more details, see Examples \ref{connectedexam1}, \ref{nonconnectedgraph} and \ref{32}.

In the last part of this paper, we give the mixed characteristic analogues of some results of Varbaro (see e.g. Proposition \ref{cohd}).
One may regard this as an application of 
(SVT). This enables us to calculate  the numerical invariant $q_{I}(R)$, see Proposition \ref{artininvariant}.

\subsection{Notations and conventions}

\begin{enumerate}
\item[$\bullet$]
All rings in this paper are assumed to be commutative noetherian with unity.

\item[$\bullet$]
For an ideal $I \subset R$, we denote by $\Min(R/I)$ the set of prime ideals of $R$ that are minimal over $I$.

\item[$\bullet$]
For a local ring $(R,\fm,k)$, the punctured spectrum is the set $\Spec(R) \setminus \{\fm\}$ denoted by $\Spec^{\circ}(R)$.

\item[$\bullet$]
For a ring $R$ and an element $f \in R$, $R_f = S^{-1}R$ denotes the ring of fractions of $R$ with respect to $S:=\{f^n\}_{n \geq 0}$.
\end{enumerate}

\section{Preliminaries; local cohomology modules}

We refer the reader to \cite{24hours} for local cohomology modules.

\subsection{Definition of local cohomology modules and long exact sequences}
Let $R$ be a  ring and let $I$ be an ideal generated by $f_{1},\ldots,f_{r}$ of $R$. For an $R$-module $M$, we have the \v{C}ech complex
\begin{equation}
C^{\bullet}(I;M) : 0 \lo M \lo \prod_{1 \leq j \leq r}M_{f_{j}} \lo \prod_{1 \leq j_{1} < j_{2} \leq r} M_{f_{j_{1}}f_{j_{2}}} \lo \cdots \lo M_{f_{1}\cdots f_{r}} \lo 0.
\end{equation}
Then its $i$-th cohomology is called the \textit{$i$-th local cohomology module} and denoted by $H^{i}_{I}(M)$. We often deal with the long exact sequences of three different types. First for a short exact sequence $0 \to N \to M \to L \to 0$, we obtain the long exact sequence
\begin{equation}
\label{longcoh1}
\cdots \lo H^{i}_{I}(N) \lo H^{i}_{I}(M) \lo H^{i}_{I}(L) \lo H^{i+1}_{I}(N) \lo \cdots.
\end{equation}

Take an element $f \in R$, an ideal $I \subset R$ and an $R$-module $M$. Then we obtain the long exact sequence
\begin{equation}
\label{longcoh2}
\cdots \lo H^{i}_{I+f}(M) \lo H^{i}_{I}(M) \lo H^{i}_{I}(M_{f}) \lo H^{i+1}_{I+f}(M) \lo \cdots
\end{equation}
which is induced from the short exact sequence of complexes $0 \to C^{\bullet}(I+f;M) \to C^{\bullet}(I;M) \to C^{\bullet}(I;M_{f}) \to 0$. 

Finally, take ideals $I,J \subset R$ and an $R$-module $M$. Then we obtain the long exact sequence, called the \textit{Mayer-Vietoris long exact sequence}
\begin{equation}
\label{longcoh3}
\cdots \lo H^{i}_{I+J}(M) \lo H^{i}_{I}(M) \oplus H^{i}_{J}(M) \lo H^{i}_{I \cap J}(M) \lo H^{i+1}_{I+J}(M) \lo \cdots.
\end{equation}

\begin{notation}
We set $\cd(I,M):=\sup\{i:H^{i}_{I}(M)\neq 0\}$.
\end{notation}

\subsection{Connectedness of punctured spectra}
The punctured spectrum $\Spec^{\circ}(R)$ of a local ring $(R,\fm,k)$ is the set of all primes $\fp \ne \fm$ with the topology induced by the Zariski topology on $\Spec R$. Let $I$ be an ideal of $R$. The punctured spectrum $\Spec^{\circ}(R/I)$ is \textit{connected} if the following property holds; For any ideals $\fa$ and $\fb$ of $R$ such that $\sqrt{\fa \cap \fb}=\sqrt{I}$ and $\sqrt{\fa + \fb} = \fm$, we have $\sqrt{\fa}$ or $\sqrt{\fb}$ equals $\fm$. Or equivalently, $\sqrt{\fa}$ or $\sqrt{\fb}$ equals $\sqrt{I}$.

\begin{remark}
If $R$ is a local domain, then it is easy to see that the punctured spectrum $\Spec^{\circ}(R)$ is connected.
\end{remark}

\section{Some surjective maps of local cohomology modules}

In this section, we will investigate the surjectivity of certain local cohomology maps. This is motivated by the notion of \textit{surjective elements}. While the papers \cite{HMS14} and \cite{MQ18} study the surjectivity of local cohomology maps in terms of $F$-singularities, we take another route, which is explained below.








\begin{definition}
Let $R$ be a ring and let $M$ be an $R$-module. Then $M$ is \textit{divisible} if for every regular element $r \in R$, and every element $m \in M$, there exists an element $m' \in M$ such that $m=rm'$. 
\end{definition}

Note that every injective module is divisible. Huneke and Sharp investigated the condition under which local cohomology modules of regular local rings of positive characteristic are injective (see \cite{HS93}). We recall some results which we apply for Theorem \ref{surjective2}.

\begin{lemma}{$($\cite[1.8 Lemma]{HS93}$)$}
\label{HS1.8}
Let $(R,\fm,k)$ be a regular local ring of characteristic $p>0$, and let $I$ be an ideal of $R$. Then  ${\mathscr{F}^{1}_R}(H^i_I(R)) := R^{(1)} \otimes_R H^i_I(R) \cong H^i_I(R)$ for all $i \ge 0$.
\end{lemma}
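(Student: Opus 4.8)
The key input is Kunz's theorem: since $R$ is regular of characteristic $p$, the Frobenius endomorphism $\phi\colon R\to R$ is flat, so $R^{(1)}$ is a flat $R$-module and the Frobenius functor $\mathscr{F}^{1}_R=R^{(1)}\otimes_R-$ is exact. Moreover, being base change along the ring map $\phi\colon R\to R^{(1)}$, it is a tensor functor, hence commutes with arbitrary colimits; in particular it commutes with localization at an element and with the cohomology of a complex of $R$-modules.

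With this in hand, the plan is to compute $H^{i}_I(R)$ from the \v{C}ech complex $C^{\bullet}(I;R)$ on a fixed set of generators $f_1,\ldots,f_r$ of $I$. Applying $\mathscr{F}^{1}_R$ and using exactness together with the fact that it commutes with cohomology gives
\[
\mathscr{F}^{1}_R\bigl(H^{i}_I(R)\bigr)\;\cong\;H^{i}\bigl(\mathscr{F}^{1}_R(C^{\bullet}(I;R))\bigr).
\]
Next I would identify the complex on the right. Writing $R_g=\varinjlim\bigl(R\xrightarrow{g}R\xrightarrow{g}\cdots\bigr)$ and exploiting the basic relation $g^{p}\cdot(a\otimes m)=a\otimes gm$ in $R^{(1)}\otimes_R M$, one gets $\mathscr{F}^{1}_R(R_g)\cong\varinjlim\bigl(R\xrightarrow{g^{p}}R\xrightarrow{g^{p}}\cdots\bigr)=R_{g^{p}}$, compatibly with the natural localization maps. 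Carrying this through each term and each differential of the \v{C}ech complex produces an isomorphism of complexes
\[
\mathscr{F}^{1}_R\bigl(C^{\bullet}(f_1,\ldots,f_r;R)\bigr)\;\cong\;C^{\bullet}(f_1^{p},\ldots,f_r^{p};R).
\]

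Finally, because $\sqrt{(f_1^{p},\ldots,f_r^{p})}=\sqrt{I}$, the \v{C}ech complex $C^{\bullet}(f_1^{p},\ldots,f_r^{p};R)$ again computes $H^{i}_I(R)$; combining the two displays yields $\mathscr{F}^{1}_R(H^{i}_I(R))\cong H^{i}_I(R)$ for all $i\ge 0$. Equivalently, one may run the same argument through the presentation $H^{i}_I(R)=\varinjlim_n\Ext^{i}_R(R/I^{n},R)$, invoking flat base change for $\Ext$ along $\phi\colon R\to R^{(1)}$ together with the identification $\mathscr{F}^{1}_R(R/I^{n})\cong R/(I^{n})^{[p]}$. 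The one genuinely nontrivial ingredient is the flatness of Frobenius; after that, the only point requiring care is the bookkeeping of the Frobenius-twisted module structure: one must verify that the identification $\mathscr{F}^{1}_R(C^{\bullet}(\underline f;R))\cong C^{\bullet}(\underline f^{[p]};R)$ (respectively the identification of the transition maps in whichever colimit model one adopts) is an isomorphism of complexes and not merely a term-by-term isomorphism. Getting that compatibility right is the main obstacle.
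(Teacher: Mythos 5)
Your argument is correct and is essentially the proof given in the cited source \cite[1.8 Lemma]{HS93}: the paper itself offers no proof of this lemma, only the citation, and Huneke--Sharp's argument is exactly the combination of Kunz's flatness of Frobenius with the identification $\mathscr{F}^{1}_R(C^{\bullet}(\underline f;R))\cong C^{\bullet}(\underline f^{[p]};R)$ (equivalently, the cofinality of $\{(I^n)^{[p]}\}$ with $\{I^n\}$ in the $\varinjlim\Ext$ presentation). Your bookkeeping of the twisted module structure, including $g^{p}\cdot(a\otimes m)=a\otimes gm$ and $\mathscr{F}^{1}_R(R_g)\cong R_{g^{p}}=R_g$, is the right way to handle the one delicate point.
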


\begin{lemma}{$($\cite[3.6 Corollary]{HS93}$)$}
\label{HS3.6}
Let $(R,\fm,k)$ be a regular local ring of characteristic $p>0$, and let $M$ be an artinian $R$-module such that $\mathscr{F}^{1}_R(M) \cong M$. Then $M$ is an injective $R$-module.
\end{lemma}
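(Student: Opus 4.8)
The plan is to establish the concrete statement that $M \cong E^{\mu}$, where $E := E_{R}(k)$ is the injective hull of the residue field and $\mu := \dim_{k}(0:_{M}\fm)$; this gives the lemma, since a finite direct sum of copies of $E$ is injective. Since $M$ is artinian, $\Supp M \subseteq \{\fm\}$, so $M$ is $\fm$-power torsion and its socle $(0:_{M}\fm)$ is a finite-dimensional $k$-vector space; hence $\mu$ is finite and $M$ embeds as an essential submodule of $E^{\mu}$. The whole problem is to show this embedding is onto.

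Two ingredients are used throughout. First, applying Lemma~\ref{HS1.8} with $I=\fm$ together with $H^{d}_{\fm}(R)\cong E$ (valid since the $d$-dimensional regular ring $R$ is Gorenstein) yields $\mathscr{F}^{1}_{R}(E)\cong E$; iterating the functor $\mathscr{F}^{1}_{R}$ on this isomorphism and on the hypothesis $\mathscr{F}^{1}_{R}(M)\cong M$ gives $\mathscr{F}^{e}_{R}(E)\cong E$ and $\mathscr{F}^{e}_{R}(M)\cong M$ for every $e\ge 1$. Second, since $R$ is regular of characteristic $p$, Kunz's theorem makes the Frobenius flat, so each $\mathscr{F}^{e}_{R}$ is an exact functor; in particular it preserves injections, is additive on finite-length modules, carries $R/\fa$ to $R/\fa^{[q]}$ (the bracket power, with $q:=p^{e}$), and carries any module annihilated by an ideal $\fa$ to one annihilated by $\fa^{[q]}$.

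The crux is a two-sided length estimate for the finite-length submodules $N_{e}:=(0:_{M}\fm^{[q]})$. For the lower bound, apply the exact functor $\mathscr{F}^{e}_{R}$ to the inclusion $k^{\mu}\cong(0:_{M}\fm)\hookrightarrow M$ and compose with an isomorphism $\mathscr{F}^{e}_{R}(M)\cong M$; this produces an injection $(R/\fm^{[q]})^{\mu}\cong\mathscr{F}^{e}_{R}(k^{\mu})\hookrightarrow M$ whose image is annihilated by $\fm^{[q]}$, hence contained in $N_{e}$, so $\ell_{R}(N_{e})\ge\mu\cdot\ell_{R}(R/\fm^{[q]})$. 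For the upper bound, $N_{e}\subseteq(0:_{E^{\mu}}\fm^{[q]})$, and Matlis duality gives $\ell_{R}\bigl((0:_{E}\fm^{[q]})\bigr)=\ell_{R}\bigl(\Hom_{R}(R/\fm^{[q]},E)\bigr)=\ell_{R}(R/\fm^{[q]})$, so $\ell_{R}(N_{e})\le\mu\cdot\ell_{R}(R/\fm^{[q]})$. The two bounds agree, so the containment $N_{e}\subseteq(0:_{E^{\mu}}\fm^{[q]})$ of finite-length modules of equal length is an equality for every $e$. Finally, $\{\fm^{[p^{e}]}\}_{e}$ is cofinal with $\{\fm^{n}\}_{n}$ and both $M$ and $E^{\mu}$ are $\fm$-power torsion, so taking the union over $e$ gives $M=\bigcup_{e}N_{e}=\bigcup_{e}(0:_{E^{\mu}}\fm^{[p^{e}]})=E^{\mu}$, which is injective.

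I expect the main obstacle to be recognizing that purely homological input is worthless here: $\mathscr{F}^{1}_{R}$ sends a minimal injective resolution of $M$ — or a minimal free resolution of $\Hom_{R}(M,E)$ — to a minimal resolution of an isomorphic module, so it leaves every Bass number (respectively Betti number) unchanged and the hypothesis $\mathscr{F}^{1}_{R}(M)\cong M$ produces no contradiction. The hypothesis must instead be converted into the numerical statement above, exploiting the coincidence that $\mathscr{F}^{e}_{R}$ inflates $\ell_{R}$ of the socle of $M$ by exactly the factor $\ell_{R}(R/\fm^{[q]})$, which is precisely the factor relating $\ell_{R}(N_{e})$ to its maximal possible value $\ell_{R}\bigl((0:_{E^{\mu}}\fm^{[q]})\bigr)$. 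A minor technical point is that $\mathscr{F}^{e}_{R}$ of a module killed by $\fa$ is killed by $\fa^{[q]}$, which is what confines the image of $\mathscr{F}^{e}_{R}(k^{\mu})$ to $N_{e}$; one may also reduce to complete $R$ at the outset if convenient, though the argument above does not need it.
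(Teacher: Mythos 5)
Your argument is correct, and it is worth noting that the paper itself offers no proof of this statement: Lemma~\ref{HS3.6} is quoted verbatim from \cite[3.6 Corollary]{HS93}, so there is no in-paper argument to compare against. What you have written is a complete, self-contained proof, and it is essentially the classical Huneke--Sharp style argument: embed the artinian module essentially into $E^{\mu}$ with $\mu=\dim_k(0:_M\fm)$, then use flatness of Frobenius (Kunz) to push the socle forward to a copy of $(R/\fm^{[q]})^{\mu}$ inside $(0:_M\fm^{[q]})$, and match this lower bound for $\ell_R\bigl((0:_M\fm^{[q]})\bigr)$ against the upper bound $\mu\cdot\ell_R(R/\fm^{[q]})$ coming from Matlis duality applied to $(0:_{E^{\mu}}\fm^{[q]})$; cofinality of $\{\fm^{[p^e]}\}$ with $\{\fm^n\}$ then forces $M=E^{\mu}$. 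Each step checks out: the socle is essential and finite-dimensional because $M$ is artinian, $\mathscr{F}^e_R$ is exact and sends $R/\fa$ to $R/\fa^{[p^e]}$ over a regular ring, and $\ell_R\bigl(\Hom_R(R/\fa,E)\bigr)=\ell_R(R/\fa)$ for $\fm$-primary $\fa$ without any completeness hypothesis. One small remark: the ingredient $\mathscr{F}^e_R(E)\cong E$, which you derive from Lemma~\ref{HS1.8} with $I=\fm$, is never actually used in the argument --- only $\mathscr{F}^e_R(M)\cong M$, exactness of $\mathscr{F}^e_R$, and the Matlis-duality length computation enter --- so that paragraph could be trimmed without loss.
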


The following theorem is a key in this article. We emphasize the importance of surjectivity of a map of local cohomology modules.

\begin{theorem}
\label{surjective2}
Let $(R,\fm,k)$ be a $d$-dimensional regular local ring of characteristic $p>0$. Fix an ideal $I \subset \fm$. Suppose  $\ell_R(H^{d-i}_\fm(R/I))<\infty$ for some fixed $i \ge 0$. Then $H^{i}_I(R)$ is a divisible module. In particular, for any nonzero element $x \in R$, the multiplication map $H^{i}_I(R) \xrightarrow{\times x} H^{i}_I(R)$ is surjective.
\end{theorem}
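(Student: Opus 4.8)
The plan is to reduce the divisibility statement to the injectivity criterion of Huneke–Sharp (Lemma~\ref{HS3.6}) applied to a suitable piece of $H^i_I(R)$, using local duality to translate the finite-length hypothesis on $H^{d-i}_\fm(R/I)$ into a structural statement about $H^i_I(R)$. First I would recall that by (graded/local) duality over the complete regular local ring $(R,\fm,k)$ — or its completion, which does not affect local cohomology — the module $H^{d-i}_\fm(R/I)$ is Matlis dual to a certain $\Ext^i_R(R/I,R)$-type module, but more to the point, the key geometric input is that $H^i_I(R)$ is \emph{artinian} away from finitely many primes; precisely, the locus where $H^i_I(R)$ fails to be artinian is controlled by where $H^{d-i}_\fm(R/I)$ is not of finite length. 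Since we are assuming $\ell_R(H^{d-i}_\fm(R/I))<\infty$, I expect to deduce that $H^i_I(R)$ is itself an \emph{artinian} $R$-module. Concretely: $\Supp H^i_I(R)\subseteq \V(I)$, and for a non-maximal prime $\fp\in\V(I)$ the localization $H^i_I(R)_\fp\cong H^i_{IR_\fp}(R_\fp)$, which one relates via local duality over $R_\fp$ to $H^{\dim R_\fp-i}_{\fp R_\fp}((R/I)_\fp)$; the finite-length hypothesis at $\fm$ propagates to force these to vanish, so $H^i_I(R)$ is supported only at $\fm$ and, being cofinite/artinian by the standard Grothendieck finiteness theory in the regular case, is artinian.

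Next I would invoke the Frobenius-stability Lemma~\ref{HS1.8}: since $R$ has characteristic $p>0$ and is regular, $\mathscr{F}^1_R(H^i_I(R))\cong H^i_I(R)$. Combined with the artinianness just established, Lemma~\ref{HS3.6} applies verbatim to $M:=H^i_I(R)$ and yields that $H^i_I(R)$ is an \emph{injective} $R$-module. Every injective module over any commutative ring is divisible — indeed, if $r\in R$ is a regular element and $m\in M$, then multiplication by $r$ is an injection $R\hookrightarrow R$, and extending the map $R\to M$, $1\mapsto m$ along it produces $m'\in M$ with $rm'=m$ — so $H^i_I(R)$ is divisible, which is the main assertion. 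For the ``in particular'' clause: a nonzero element $x\in R$ in a domain (here $R$ is regular local, hence a domain) is automatically a regular element, so divisibility gives that $H^i_I(R)\xrightarrow{x}H^i_I(R)$ is surjective.

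The main obstacle I anticipate is the first step — establishing that the finite-length condition $\ell_R(H^{d-i}_\fm(R/I))<\infty$ genuinely forces $H^i_I(R)$ to be artinian (equivalently, $\fm$-torsion, equivalently $\fm$-cofinite with support exactly $\{\fm\}$), rather than merely artinian ``up to'' something. This requires a careful localization argument at each non-maximal $\fp\in\V(I)$ together with an application of local duality over the regular local rings $R_\fp$, keeping track of the shift in cohomological degree $\dim R_\fp - i$ versus $d-i$, and checking that vanishing at the closed point implies vanishing at the generic points of $\V(I)$ and then propagates downward. Once artinianness is in hand, the remainder is a formal chain: Lemma~\ref{HS1.8} $\Rightarrow$ Lemma~\ref{HS3.6} $\Rightarrow$ injective $\Rightarrow$ divisible, with the domain property of $R$ handling the final sentence. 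One should also double-check the edge case $i=0$ (where $H^0_I(R)=0$ since $I\subset\fm$ is nonzero in a domain, and the statement is vacuous) and note that passing to the completion is harmless because completion commutes with local cohomology and preserves both regularity and the relevant lengths.
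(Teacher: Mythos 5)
Your overall architecture coincides with the paper's: first establish that $H^i_I(R)$ is artinian, then combine Lemma \ref{HS1.8} with the artinianness and apply Lemma \ref{HS3.6} to conclude that $H^i_I(R)$ is injective, hence divisible; finally use that $R$ is a domain (so every nonzero $x$ is a regular element) to get surjectivity of multiplication by $x$. That second half of your write-up is correct and is exactly the paper's argument.

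The gap is the one you yourself flagged: deducing artinianness of $H^i_I(R)$ from $\ell_R(H^{d-i}_\fm(R/I))<\infty$. Your proposed route via localization plus local duality does not close it. Local duality over a regular local ring identifies the Matlis dual of $H^{j}_{\fp R_\fp}\bigl((R/I)_\fp\bigr)$ with $\Ext^{\dim R_\fp - j}_{R_\fp}\bigl((R/I)_\fp, R_\fp\bigr)$, i.e.\ with an Ext module, not with the local cohomology module $H^i_{I R_\fp}(R_\fp)$ you need to control; the natural map $\Ext^i_R(R/I,R)\to H^i_I(R)$ is not an isomorphism in general, and in equal characteristic $0$ the vanishing of $H^{d-i}_\fm(R/I)$ and of $H^i_I(R)$ are genuinely independent conditions. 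The implication you want is a characteristic-$p$ phenomenon: it is Lyubeznik's theorem \cite[Corollaries 3.3 and 3.4]{Lyu06} (resting on the Peskine--Szpiro Frobenius-functor technique for regular rings) that $H^i_I(R)$ is artinian precisely when $H^{d-i}_\fm(R/I)$ has finite length. The paper closes the step by citing this result; a self-contained argument would have to bring the Frobenius into the artinianness step as well, not only into the Huneke--Sharp step.
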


\begin{proof}
Since the $R$-module $H^{d-i}_\fm(R/I)$ is of finite length, it follows from \cite[Corollary 3.3]{Lyu06} and \cite[Corollary 3.4]{Lyu06} that $H^i_I(R)$ is an artinian $R$-module. Combining Lemma \ref{HS1.8} with this fact and Lemma \ref{HS3.6}, $H^i_I(R)$ is an injective $R$-module. Thus, it is divisible. Noting that $R$ is a domain, $H^{i}_I(R) \xrightarrow{\times x} H^{i}_I(R)$ is surjective for any nonzero element $x \in R$, as desired.
\end{proof}

A natural question arises:

\begin{question}
\label{Ques1}
Let $(R,\fm)$ be a  regular local ring of any characteristic. Is the surjectivity $H^{i}_I(R) \xrightarrow{\times x} H^{i}_I(R)$ true without assuming $\ell_R(H^{d-i}_\fm(R/I))<\infty$?
\end{question}

This is not the case even if we assume $x\in I$:

\begin{example}(\cite[Corollary 6.7]{mixedlyu})
\label{37}
There is a $d$-dimensional regular local ring $R$ of mixed characteristic
$p>0$ equipped with an ideal $I$ containing $p$, so that the map
$H^{i}_I(R) \xrightarrow{\times p} H^{i}_I(R)$ is not surjective.
\end{example}

So under what conditions does the map $H^{i}_I(R) \xrightarrow{\times x} H^{i}_I(R)$ become surjective? 


\begin{proposition}\label{regularseq}
Let $R$ be a ring and let $I$ be an ideal of $R$ generated by a regular sequence.
Then $H^{i}_I(R) \xrightarrow{\times x} H^{i}_I(R)$ is surjective for some $x \in I$.
\end{proposition}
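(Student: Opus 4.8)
The plan is to exploit the fact that when $I=(f_1,\dots,f_r)$ is generated by a regular sequence, the top local cohomology $H^r_I(R)$ (where $r=\grade(I)=\mathrm{cd}(I,R)$, by the Koszul resolution) has an explicit description as a direct limit of Koszul cohomologies, or equivalently as $R_{f_1\cdots f_r}$ modulo the images of the $R_{f_1\cdots\widehat{f_j}\cdots f_r}$. First I would observe that in this situation $H^i_I(R)=0$ for $i\neq r$, so the only interesting case is $i=r$; for all other $i$ the map is trivially surjective (the zero map onto the zero module), and in fact \emph{any} $x$ works, so in particular some $x\in I$ works. Thus the content is the case $i=r$.

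For $i=r$, the key computation is that $H^r_I(R)\cong \varinjlim_n R/(f_1^n,\dots,f_r^n)$, where the transition maps are multiplication by $f_1\cdots f_r$. I would take $x=f_1\cdots f_r\in I$. Given a class $\xi\in H^r_I(R)$ represented by $\bar{a}\in R/(f_1^n,\dots,f_r^n)$ at stage $n$, I claim $\xi=x\cdot\eta$ where $\eta$ is the class of $\bar{a}\in R/(f_1^{n+1},\dots,f_r^{n+1})$ at stage $n+1$: indeed, multiplication by $x=f_1\cdots f_r$ applied to the stage-$(n+1)$ representative $\bar a$ gives the stage-$(n+1)$ element $f_1\cdots f_r\cdot\bar a$, and under the transition map $R/(f_1^n,\dots,f_r^n)\to R/(f_1^{n+1},\dots,f_r^{n+1})$ the class of $\bar a$ at stage $n$ maps to the class of $f_1\cdots f_r\cdot \bar a$ at stage $n+1$. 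Hence $\xi$ and $x\eta$ are represented by the same element of the same term of the directed system, so they are equal in the colimit. Therefore $H^r_I(R)\xrightarrow{x}H^r_I(R)$ is surjective with $x=f_1\cdots f_r\in I$.

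I expect the only real subtlety to be bookkeeping: making sure the identification of $H^r_I(R)$ with the colimit of the $R/(f_1^n,\dots,f_r^n)$ and the identification of the transition map with multiplication by $f_1\cdots f_r$ are correctly normalized, since this is where sign or indexing errors creep in. One clean way to avoid writing out the Koszul complex explicitly is to induct on $r$: for $r=1$, $H^1_{(f_1)}(R)=R_{f_1}/R$, and multiplication by $f_1$ on $R_{f_1}/R$ is visibly surjective since every element is $\overline{a/f_1^n}=f_1\cdot\overline{a/f_1^{n+1}}$; for the inductive step, use that $f_r$ is a nonzerodivisor on $R/(f_1,\dots,f_{r-1})$ together with the long exact sequence \eqref{longcoh2} relating $H^\bullet_{I}$, $H^\bullet_{I'}$ and $H^\bullet_{I'}((-)_{f_r})$ where $I'=(f_1,\dots,f_{r-1})$, to reduce the surjectivity of multiplication by $f_1\cdots f_r$ to the case already handled. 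Either route is routine once the colimit description is in hand, so there is no genuine obstacle here—the proposition is essentially a structural observation about top local cohomology of complete intersection ideals.
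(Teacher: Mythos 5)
Your proof is correct, but it takes a genuinely different route from the paper's. Both arguments start with the same reduction: since $I$ is generated by a regular sequence of length $m$, one has $H^{i}_I(R)=0$ for $i\neq m$ (grade in low degrees, the \v{C}ech complex on $m$ generators in high degrees), so only the top degree matters. From there the paper argues in one line with the long exact sequence attached to $0\to R\xrightarrow{x_1}R\to R/x_1R\to 0$: the cokernel of multiplication by $x_1$ on $H^{m}_I(R)$ is identified with $H^{m}_I(R/x_1R)$, which vanishes because $I(R/x_1R)$ is generated by the $m-1$ images of $x_2,\ldots,x_m$, so $\cd(I,R/x_1R)\le m-1$; thus the paper's surjective element is a single member $x_1$ of the regular sequence. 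You instead invoke the identification $H^{m}_I(R)\cong\varinjlim_n R/(f_1^{n},\ldots,f_r^{n})$ with transition maps given by multiplication by $f_1\cdots f_r$, take $x=f_1\cdots f_r$, and your verification that every class in the colimit is divisible by $x$ is correct. Your route requires setting up the Koszul-to-\v{C}ech colimit, but it buys something the paper's does not: the surjectivity of multiplication by the product of the generators on the top local cohomology holds for an \emph{arbitrary} generating set, with the regular-sequence hypothesis used only to kill the lower-degree modules; this stronger observation is essentially the content of the Example immediately following the proposition. Your fallback induction via the sequence \eqref{longcoh2} would also work, but it too differs from the paper's one-step argument.
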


\begin{proof}
Let $\underline{x}:=x_1,\ldots,x_m$ be a regular sequence which generates $I$. Without loss of generality, we may assume that $i=m$. 
Then considering the long exact sequence induced by 
$$
0 \lo R \xrightarrow{\times{x_1}} R \lo R/x_1R \lo 0,
$$
we obtain the desired surjection.
\end{proof}

\begin{lemma}\label{local}
Let $B$ be a noetherian ring and $H$ be a module supported at a maximal ideal $\fm\in\Max(B)$. If $H_\fm$ is finite length as a  $B_\fm$-module, then $\ell_B(H)<\infty$.
\end{lemma}

\begin{proof}Since artinian modules  are finitely embedded and that $\Supp(H)\subseteq\{\fm\}$, there is an integer $t$ such that
\begin{equation}\label{20221002}
H_{\fm}\subseteq \oplus_tE_{B_\fm}(B_\fm/ \fm B_\fm)\cong\oplus_t E_{B}(B / \fm  )_\fm=\oplus_tE_{B}(B / \fm  ).
\end{equation}
Let $f:H\to H_\fm$ be the localization map, and let $K:=\ker(f)$. We look at $$0\lo K_{\fm}\lo H_{\fm}\stackrel{f_{\fm}}\lo (H_{\fm})_{\fm}.$$Since $f_{\fm}$ is an isomorphism, we deduce $K_{\fm}=0$. Since $K\subseteq H$,  we have $\Supp(K)\subseteq \Supp(H)\subseteq\{\fm\}$. Consequently, $K=0$.
From this, $$H\subseteq H_{\fm}\stackrel{(\ref{20221002})}\subseteq \oplus_tE_{B}(B / \fm  ).$$
This shows that the $B$-module $H$ is artinian. Recall that there is an integer  $s$ such that $({\fm}B_\fm)^s H_{\fm}=0$. So, $\fm^sH\subset ({\fm}B_\fm)^s H_{\fm}=0$. As $H$ is artinian, this shows that $\ell_B(H)$ is  finite.
\end{proof}
Here is a characteristic zero version of Theorem \ref{surjective2}:

\begin{proposition}
\label{surjective}
Let $(R,\fm,k)$ be a $d$-dimensional Cohen-Macaulay  local integral domain containing the field $k$. Fix a square-free monomial ideal $I \subset \fm$ with respect to a full system of parameters (that is, $I$  is generated by
square-free monomials in a system of parameters $x_1,\ldots,x_d$). Suppose  $\ell_R(H^{d-i}_\fm(R/I))<\infty$  for some fixed $i \ge 0$. Then    $H^{i}_I(R) \xrightarrow{\times x} H^{i}_I(R)$ is surjective for  some   $x \in I$.
\end{proposition}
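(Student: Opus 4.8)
The plan is to reduce Proposition \ref{surjective} to a purely cohomological statement about the \v{C}ech complex on the system of parameters, just as in Theorem \ref{surjective2} but now exploiting the field $k$ (hence characteristic zero, or at least no Frobenius) via the finiteness hypothesis and a direct limit description of $H^i_I(R)$. First I would set $x_1,\ldots,x_d$ to be the given square-free monomials generating $I$, so that $\sqrt{I}$ is an intersection of coordinate primes in the $x_j$ and $\underline{x}$ is a full system of parameters; in particular $R$ is a finite free module over the regular subring (or at least $\underline{x}$ is a regular sequence since $R$ is Cohen--Macaulay). The key observation is that a square-free monomial ideal in a s.o.p.\ is, up to radical, an intersection of ideals each generated by a \emph{subsequence} of $\underline{x}$; I would run an induction on the number of "components'', using the Mayer--Vietoris sequence \eqref{longcoh3} to split $H^\bullet_I(R)$ into pieces $H^\bullet_J(R)$ for $J$ generated by genuine subsequences of the regular sequence $\underline{x}$.

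The base case is exactly Proposition \ref{regularseq}: if $J=(x_{j_1},\ldots,x_{j_m})$ is generated by a regular subsequence, then $H^m_J(R)\xrightarrow{x_{j_1}}H^m_J(R)$ is surjective (it is the connecting map from $0\to R\xrightarrow{x_{j_1}}R\to R/x_{j_1}R\to 0$, and $H^{m}_{J}$ of the quotient vanishes for degree reasons since $J/x_{j_1}$ is generated by $m-1$ elements), and all higher local cohomology vanishes. For the inductive step, write $\sqrt{I}=\sqrt{J_1}\cap\sqrt{J_2}$ with $J_1,J_2$ smaller square-free monomial ideals and $\sqrt{J_1+J_2}$ again of the same type; the Mayer--Vietoris sequence gives a four-term exact piece around $H^i_I(R)$, and I would choose $x$ to be a product (or suitable power) of the variables $x_j$ so that multiplication by $x$ acts surjectively on each of $H^i_{J_1}(R)$, $H^i_{J_2}(R)$ and $H^{i}_{J_1+J_2}(R)$ simultaneously by the inductive hypothesis; a diagram chase then forces surjectivity of $x$ on $H^i_I(R)$. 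Here is where the finiteness hypothesis $\ell_R(H^{d-i}_\fm(R/I))<\infty$ enters: by local duality over the Cohen--Macaulay $k$-algebra $R$ it guarantees that $H^i_I(R)$ is artinian (\cite[Corollary 3.3]{Lyu06}, \cite[Corollary 3.4]{Lyu06} apply in equal characteristic too, or Grothendieck's finiteness theorem), so that the "divisible'' conclusion of Theorem \ref{surjective2} can be transported — an artinian module on which a fixed element acts with stable image is acted on surjectively by that element. Alternatively one argues directly: artinian modules have DCC, so the descending chain $x^nH^i_I(R)$ stabilizes, and one shows the stable value is the whole module by the Mayer--Vietoris reduction.

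The main obstacle I anticipate is the bookkeeping in the inductive step: passing from $\sqrt{I}$ to an honest intersection of subsequence-ideals and making sure that at each stage the "intersection'' ideal $J_1\cap J_2$ and the "sum'' ideal $J_1+J_2$ are again square-free monomial ideals with respect to subsequences of the \emph{same} system of parameters $\underline{x}$ — this is combinatorially clean for coordinate monomial ideals but needs care when the monomials are not single variables, and one must track that the single element $x\in I$ chosen at the end works uniformly for every node of the recursion (taking $x=x_1\cdots x_d$, or a high power thereof, should suffice since every relevant local cohomology module is $x_j$-torsion-free in the appropriate range). A secondary point is verifying that the finiteness hypothesis is inherited by the subquotients $R/J$ appearing in the recursion, so that artinianness of $H^i_{J}(R)$ holds at every stage; this should follow from the hypothesis on all $\fp\in\Min(R/I)$ together with standard behavior of local cohomology under the Mayer--Vietoris sequence, but it is the step most likely to require a careful argument rather than a citation.
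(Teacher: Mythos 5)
Your route is genuinely different from the paper's, and as written it has two gaps, one of which I think is fatal. The Mayer--Vietoris induction does not close. First, the homological step: in the exact sequence $H^i_{J_1}(R)\oplus H^i_{J_2}(R)\to H^i_{J_1\cap J_2}(R)\to H^{i+1}_{J_1+J_2}(R)\to H^{i+1}_{J_1}(R)\oplus H^{i+1}_{J_2}(R)$, surjectivity of multiplication by $x$ on the first and third terms does \emph{not} give surjectivity on $H^i_{J_1\cap J_2}(R)$; the four lemma additionally requires multiplication by $x$ to be \emph{injective} on the fourth term, and you have no way to arrange that. Second, and more seriously, the statement you are inducting on is simply false at the intermediate nodes of the recursion, precisely because the hypothesis $\ell_R(H^{d-i}_\fm(R/I))<\infty$ is not inherited by the ideals $J_1$, $J_2$, $J_1+J_2$. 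The partial intersections and sums are again coordinate-arrangement ideals, but their local cohomology modules in the relevant degrees typically have positive-dimensional support and are not divisible: already $H^1_{(x_1)}(R)\cong R_{x_1}/R$ is not hit by multiplication by $x_1x_2$ (its image is $x_2\cdot(R_{x_1}/R)$, and $1/x_1$ is not of the form $x_2 r/x_1^n$ modulo $R$ by unique factorization --- this is exactly the paper's own example with $R=k[[x,y]]$, $\fp=(y)$). So a single uniform $x$ such as $x_1\cdots x_d$ fails at such nodes, and no element of $I$ need work either. What you call ``a secondary point'' at the end --- inheritance of the finiteness hypothesis --- is in fact the entire difficulty; without it the conclusion is false, so no argument that ignores it can succeed. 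A further unproved step is your claim that $\ell_R(H^{d-i}_\fm(R/I))<\infty$ forces $H^i_I(R)$ to be artinian over a general Cohen--Macaulay local domain containing a field: the results of \cite{Lyu06} you cite are for regular rings of characteristic $p$, and even granting artinianness, stabilization of the chain $x^nH^i_I(R)$ only gives surjectivity onto the stable submodule, not onto $H^i_I(R)$.

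For comparison, the paper's proof sidesteps all of this by changing rings rather than decomposing the ideal. By Hartshorne's flatness result, $A:=k[\underline{x}]$ is a polynomial ring and $A\hookrightarrow R$ is flat, so with $J\subset A$ the monomial ideal satisfying $JR=I$, flat base change and independence give $H^{d-i}_\fm(R/I)\cong H^{d-i}_{\fm_A}(A/J)\otimes_AR$ and hence $\ell_A(H^{d-i}_{\fm_A}(A/J))<\infty$. Richardson's identity $\Att_A(H^{d-i}_{\fm_A}(A/J))=\Ass_A(H^i_J(A))$ then forces $\Supp_A(H^i_J(A))\subseteq\{\fm_A\}$, so $H^i_J(A)$ is an injective $A$-module by Lyubeznik's theorem in characteristic $0$ (or by Theorem \ref{surjective2} in characteristic $p$), hence divisible; tensoring back up along the flat map gives divisibility of $H^i_I(R)\cong H^i_J(A)\otimes_AR$. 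If you want to salvage a Mayer--Vietoris argument, you would first need some such mechanism converting the finite-length hypothesis into zero-dimensionality of the support of $H^i_I(R)$, since divisibility is false without it.
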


\begin{proof}
 By a result of Hartshorne \cite[Proposition 1]{H66}, $B:=k[\underline{x}]$ is the polynomial ring and the inclusion map  $B\hookrightarrow R$ is a flat extension.
 Let $\fm_B:=\fm\cap B$. Then $\fm_B =  (x_1,\ldots, x_d)B $ is the irrelevant maximal ideal of $B$. Let $A:=B_{\fm_B}$. According to
 \cite[Theorem 7.1]{Mat86}
 we deduce that $A\to R_{\fm}=R$ is flat. We are going to use  \cite[Theorem 7.2]{Mat86} to conclude that $A\to R$ is faithfully flat.   Let $J$ be the ideal of $B$ such that
$JR=I$. By the assumption $J$ exists, and it is a square-free monomial ideal in $B$. Let $J_0:=JA$. Then $J_0R=I$. 
We are going to use flat base change theorem along with the independence theorem for local cohomology modules to deduce that $$H^{d-i}_\fm(R/I) \cong H^{d-i}_{\underline{x}}(A/J_0)\otimes_AR\cong H^{d-i}_{\fm_A}(A/J_0)\otimes_AR,$$
where $\fm_A :=  (x_1,\ldots, x_d)A $ is the irrelevant maximal ideal of $A$.
Let $0\subsetneqq M_0\subsetneqq \ldots \subsetneqq H^{d-i}_{\fm_A}(A/J_0)$ be strict. Since
$A\hookrightarrow R$ is  faithfully flat, $$0\subsetneqq M_0\otimes_AR\subsetneqq \ldots \subsetneqq H^{d-i}_{\fm_A}(A/J_0)\otimes_AR.$$
From this, $\ell_A(H^{d-i}_{\fm_A}(A/J_0))<\infty$.
Recall that local cohomology modules behave  well with respect to localization, and that $\Supp_B\left(H^{d-i}_{\fm_B}(B/J)\right)\subseteq \{\fm_B\}$.
Combine these along with  Lemma \ref{local},
we know that
$\ell_B(H^{d-i}_{\fm_B}(B/J))<\infty$.
This allows us to apply a result of Richardson \cite[Theorem 2.4]{rich}, and observe that $$\Att_B(H^{d-i}_{\fm_B}(B/J))=
\Ass_B(H^{i}_J(B)).$$ Since $H^{d-i}_{\fm_B}(B/J)$ is of finite length, we have $\Att_B(H^{d-i}_{\fm_B}(B/J)) \subset \{\fm_B\}$ and thus, $\Ass_B(H^{i}_J(B))\subset\{\fm_B\}$. It follows that the dimension of $\Supp_B(H^{i}_J(B))$ is zero, provided $H^{i}_J(B)\neq 0$. Without loss of generality, we may assume $H^{i}_J(B)\neq 0$, and that $\Char(k)=0$. By a result of Lyubeznik \cite[Corollary 3.6(b)]{Lyu93}, we know $H^i_J(B)$ is an injective $B$-module. Noting that $B$ is a domain, $H^{i}_J(B) \xrightarrow{\times x} H^{i}_J(B)$ is surjective for any nonzero element $x \in B$. Since $H^{i}_I(B) \cong  H^{i}_J(B)\otimes_BR$,
 we get the desired claim.
\end{proof}

\begin{example}
Let $I$ be an ideal of a noetherian integral domain $R$ and $M$ be any $R$-module. Suppose $I=(x_1,\ldots,x_n)$. Then $H^{n}_I(M) \xrightarrow{\times x_i} H^{n}_I(M)$ is surjective for all $1 \leq i \leq n$.
\end{example}

\begin{proof}
First, we deal with the case $M:=R$. We look at the exact sequence $0\to R \xrightarrow{\times x_i} R\to R/x_i R\to 0$, and deduce the following
exact sequence:
$$
\zeta:=\big\{H^{n}_I(R) \xrightarrow{\times x_i} H^{n}_I(R)\lo H^{n}_I(R/x_{i}R)\big\}.
$$
Let $J:=(x_1,\ldots,\widehat{x_i},\ldots, x_{n})$, and recall that its cohomological dimension is at most $n-1$. Since
$x_i$ annihilates $H^{n}_J(R/x_iR)$, it follows from $(\ref{longcoh2})$ that
$$
0=H^{n}_J(R/x_iR)=H^{n}_{J+x_iR}(R/x_iR)=H^{n}_I(R/x_iR).
$$
We use this along with the exact sequence $\zeta$, to conclude that $H^{n}_I(R) \xrightarrow{\times x_i}   H^{n}_I(R)$ is surjective.

Recall that the functor $H^{n}_I(-)$ is right-exact. Now, let $M$ be an $R$-module. Since $H^{n}_I(M) \cong H^{n}_I(R)\otimes _RM$, we deduce that $H^{n}_I(M) \xrightarrow{\times x_i} H^{n}_I(M)$ is surjective.
\end{proof}

\begin{corollary}
\label{cn}
Let $\fp$ be a prime ideal of the ring of formal power series over a perfect field $R := k[|X_1,\ldots,X_n|]$ of height $n-1$. Then $H^{i}_\fp(R) \xrightarrow{\times x} H^{i}_\fp(R)$ is surjective for some $x \in \fp$ and all $i$.
\end{corollary}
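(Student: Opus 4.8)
The plan is to show that $H^i_\fp(R)$ is nonzero in a single degree, $i=n-1$, and then to produce an explicit surjective element there. Write $R=k[|X_1,\dots,X_n|]$; it is a complete regular local domain of dimension $n$, hence Cohen--Macaulay and a unique factorization domain, and $\dim(R/\fp)=n-\Ht(\fp)=1$ (so $n\ge 2$; the degenerate case $\fp=0$ is excluded). First I would pin down the range of non-vanishing: since $R$ is Cohen--Macaulay, $\grade(\fp,R)=\Ht(\fp)=n-1$, so $H^i_\fp(R)=0$ for $i<n-1$; Grothendieck vanishing gives $H^i_\fp(R)=0$ for $i>\dim R=n$; and since $R$ is a complete local domain with $\dim(R/\fp)=1>0$, the Hartshorne--Lichtenbaum vanishing theorem gives $H^n_\fp(R)=0$. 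Thus $H^i_\fp(R)=0$ for all $i\neq n-1$, and for those $i$ the map $H^i_\fp(R)\xrightarrow{x}H^i_\fp(R)$ is trivially surjective for every $x$; everything reduces to the degree $i=n-1$.

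For $i=n-1$ the one point requiring care is the choice of $x$. Since $R$ is a unique factorization domain and $\fp\neq 0$, it contains a prime element $\pi$ (a suitable prime factor of any nonzero element of $\fp$); set $x:=\pi$. Then $R/\pi R$ is again a complete local domain, now of dimension $n-1$. Applying the long exact sequence $(\ref{longcoh1})$ to $0\to R\xrightarrow{\pi}R\to R/\pi R\to 0$ gives
\[
H^{n-1}_\fp(R)\ \xrightarrow{\ \pi\ }\ H^{n-1}_\fp(R)\ \longrightarrow\ H^{n-1}_\fp(R/\pi R),
\]
so it suffices to prove $H^{n-1}_\fp(R/\pi R)=0$. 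But $H^{n-1}_\fp(R/\pi R)=H^{n-1}_{\fp(R/\pi R)}(R/\pi R)$ and $\dim\bigl((R/\pi R)/\fp(R/\pi R)\bigr)=\dim(R/\fp)=1>0$; as $R/\pi R$ is a complete local domain of dimension $n-1$, Hartshorne--Lichtenbaum again forces this module to vanish. Hence $H^{n-1}_\fp(R)\xrightarrow{\pi}H^{n-1}_\fp(R)$ is surjective, and with the first step this proves the corollary with $x=\pi$.

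What makes the choice of $x$ necessary, and thus constitutes the real content, is that Theorem \ref{surjective2} does not apply here: for a prime of height $n-1$ the module $H^1_\fm(R/\fp)$ is not of finite length, since its Matlis dual $\Ext^{n-1}_R(R/\fp,R)$ localizes at $\fp$ to $\Ext^{n-1}_{R_\fp}(\kappa(\fp),R_\fp)\cong\kappa(\fp)\neq 0$, so $\fp\in\Supp$. Hence $H^{n-1}_\fp(R)$ need not be divisible and surjectivity can fail for a generic $x$. The substance of the argument is the observation that taking $x$ to be a prime element of $R$ contained in $\fp$ moves us, after killing $x$, into a complete local domain of dimension $n-1$ in which the obstruction term $H^{n-1}_\fp(R/xR)$ is annihilated by the Hartshorne--Lichtenbaum theorem; the rest is standard vanishing.
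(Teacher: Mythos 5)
Your proof is correct, but it takes a genuinely different route from the paper's in the key step. Both arguments begin with the same reduction to the single degree $i=n-1$ (grade, Grothendieck vanishing, and Hartshorne--Lichtenbaum). At that point the paper invokes the Cowsik--Nori theorem to write $\fp=\sqrt{(f_1,\dots,f_{n-1})}$ as a set-theoretic complete intersection, so that $H^{n-1}_\fp(R)$ becomes a top local cohomology module $H^{n-1}_{(f_1,\dots,f_{n-1})}(R)$, and then applies the preceding Example (multiplication by a generator is surjective on top local cohomology, because the obstruction $H^{n-1}_{(f_1,\dots,f_{n-1})}(R/f_jR)$ vanishes by counting generators) to get surjectivity of multiplication by one of the $f_j$. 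You instead exploit the UFD property of $R$ to choose a prime element $\pi\in\fp$, pass to the complete local \emph{domain} $R/\pi R$ of dimension $n-1$, and kill the obstruction term $H^{n-1}_\fp(R/\pi R)$ by a second application of Hartshorne--Lichtenbaum. Your route avoids Cowsik--Nori altogether, which is a real gain: the cited Cowsik--Nori result is a characteristic-$p$ statement, while the corollary allows any perfect field (in particular characteristic $0$), and your argument needs neither positive characteristic nor perfectness --- only that $R$ is a complete regular UFD. It also produces a more explicit element $x$ (any prime factor of any nonzero element of $\fp$). Your closing observation that Theorem \ref{surjective2} is unavailable here because $H^{1}_\fm(R/\fp)$ has infinite length (its Matlis dual $\Ext^{n-1}_R(R/\fp,R)$ is supported at $\fp$) is correct and explains why a specific choice of $x$ is genuinely needed.
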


\begin{proof}
The only nontrivial $i$ is $n-1$ by the Hartshorne-Lichtenbaum's vanishing theorem and the cohomological characterization of grade. By a celebrated result of Cowsik-Nori \cite{cn}, there are $f_1,\ldots,f_{n-1}$ such that $\sqrt{(f_1,\ldots,f_{n-1})}=\fp$. Since local cohomology does not change up to the radical of ideals, it is enough to apply the previous example.
\end{proof}

\begin{corollary}\label{ccn}
Adopt the notation of Corollary \ref{cn}, and let $M$ be any $R$-module. Then the map $H^{n-1}_\fp(M) \xrightarrow{\times x} H^{n-1}_\fp(M)$ is surjective for some $x \in \fp$.
\end{corollary}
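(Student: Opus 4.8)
The plan is to reduce Corollary \ref{ccn} to the $M=R$ case handled in Corollary \ref{cn}. First I would recall that in Corollary \ref{cn} we have already produced a specific element $x \in \fp$ (obtained from the Cowsik--Nori generators $f_1,\ldots,f_{n-1}$ with $\sqrt{(f_1,\ldots,f_{n-1})}=\fp$, say $x = f_1$) for which the multiplication map $H^{n-1}_\fp(R) \xrightarrow{x} H^{n-1}_\fp(R)$ is surjective; this is literally an instance of the Example preceding Corollary \ref{cn}, applied to the ideal $(f_1,\ldots,f_{n-1})$ which has cohomological dimension exactly $n-1$ (the cohomological dimension does not exceed the number of generators, and it is $\ge n-1$ since $\Ht\fp = n-1$ in the Cohen--Macaulay ring $R$, so $H^{n-1}_\fp(R) \neq 0$).

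The key step is then the right-exactness of the top local cohomology functor. Since $\cd(\fp, R) = n-1$ (equivalently $\cd((f_1,\ldots,f_{n-1}),R)=n-1$ and local cohomology is insensitive to passing to the radical), the functor $H^{n-1}_\fp(-)$ is right-exact, hence commutes with tensor products: $H^{n-1}_\fp(M) \cong H^{n-1}_\fp(R) \otimes_R M$ functorially in $M$. This is exactly the device already used at the end of the Example preceding Corollary \ref{cn}. Applying the functor $- \otimes_R M$ to the surjection $H^{n-1}_\fp(R) \xrightarrow{x} H^{n-1}_\fp(R)$ — and noting that $- \otimes_R M$ preserves surjections — yields that $H^{n-1}_\fp(M) \xrightarrow{x} H^{n-1}_\fp(M)$ is surjective, with the same $x \in \fp$ as in Corollary \ref{cn}. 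One should also dispatch the degenerate cases: if $n=1$ then $\fp = 0$ is not a valid choice (or $H^0$ is trivial), and for $i \neq n-1$ we have $H^i_\fp(M)$ vanishing or the statement is vacuous by the Hartshorne--Lichtenbaum vanishing theorem together with the grade characterization, as in the proof of Corollary \ref{cn}; but since the statement only asserts surjectivity in degree $n-1$, this is automatic.

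I do not anticipate a genuine obstacle here: the whole argument is a formal consequence of right-exactness of $H^{n-1}_\fp(-)$ plus the already-established $M=R$ case. The only point requiring a line of care is verifying that $H^{n-1}_\fp(R) \neq 0$ so that $\cd(\fp,R) = n-1$ precisely (not merely $\le n-1$), which guarantees right-exactness of the functor in the top degree; this follows because $R$ is a regular — in particular Cohen--Macaulay — local domain of dimension $n$ and $\fp$ has height $n-1$, so $\depth R_\fp = \Ht \fp = n-1$ forces $H^{n-1}_{\fp R_\fp}(R_\fp) \neq 0$ and hence $H^{n-1}_\fp(R) \neq 0$. Everything else is bookkeeping with the isomorphism $H^{n-1}_\fp(M) \cong H^{n-1}_\fp(R) \otimes_R M$ and the fact that tensoring preserves epimorphisms.
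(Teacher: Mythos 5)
Your argument is correct and is essentially the paper's own proof: both use that $\cd(\fp,R)=n-1$ forces $H^{n-1}_\fp(-)$ to be right-exact, hence $H^{n-1}_\fp(M)\cong H^{n-1}_\fp(R)\otimes_R M$, and then tensor the surjection from Corollary \ref{cn} with $M$. (The only cosmetic remark: right-exactness in degree $n-1$ needs only $\cd(\fp,R)\le n-1$, so your extra care about the exact equality is harmless but not required.)
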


\begin{proof}
Since cohomological dimension of $\fp$ is $n-1$, we know that $H^{i}_\fp(M)=0$ for all $i>n-1$. From this, we observe that  $H^{n-1}_\fp(M)\cong H^{n-1}_\fp(R)\otimes_RM$. It remains to apply Corollary \ref{cn}.	
\end{proof}

\begin{remark}
\label{attachedprime}
Let $(R,\fm)$ be a local ring and let $0 \ne M$ be a finitely generated module of dimension $n$. Then there is an element $r\in\fm$ such that $H^n_{\fm}(M) \xrightarrow{\times r} H^n_{\fm}(M)$ is surjective if and only if $n>0$.

Indeed, if $n=0$, then the claim is clear by Nakayama's lemma. So we may assume that $n>0$. By a computation of Macdonald and Sharp \cite{Msh7}, we know that
$$
\Att(H^n_{\fm}(M))=\{\fp\in\Ass(M)~|~\dim(R/ \fp)=n\}.
$$
Then by prime avoidance, we get $\fm\nsubseteq \bigcup _{\fq\in\Att H^n_{\fm}(M)}\fq$. This enables us to find $r\in\fm\setminus\bigcup_{\fq\in\Att H^n_{\fm}(M)}\fq$. Now, the desired claim is clear by the definition of attached prime ideals.
\end{remark}

Adopt the notation of Example \ref{37}. It is easy to see that 
 $H^3_{I}(R/(p^n))\to H^3_{I}(R/(p)) $ is not surjective (see the proof of \cite[Proposition 6.3]{mixedlyu}). This suggests the following question:

\begin{question}
\label{Ques2}
Do the analogues of \cite[Proposition 3.2]{MQ18} and \cite[Proposition 3.3]{MQ18} hold for local cohomology modules supported by an arbitrary ideal?
\end{question}

In the rest of this section, let us consider the following question:

\begin{question}
\label{Ques3}
Let $x \in \fm$ be a regular element in a local ring $(R,\fm)$. Consider the following two properties:
\begin{enumerate}
\item[i)]
The natural maps $H^i_{I}(R/x^nR)\to H^i_{I}(R/xR) $, induced by $R/x^nR \to R/xR$, is surjective for all $i\geq0$ and $n>0$.

\item[ii)]
The multiplication map $H^i_{I}(R)\xrightarrow{\times x} H^i_{I}(R) $ is surjective for all $i\geq0$.
\end{enumerate}
Does $i)$ imply $ii)$?
\end{question}

Let us show that the regularity assumption on $x$ is important:

\begin{example}
Let $(A,\fn)$ be a regular local ring with two nonzero prime ideals $\fp:=xA$ and $\fq$ such that $\fp\neq \fq$. Denote the ring $A/\fp\fq$ by $R$. Note that $x$ is not regular in $R$. Then we claim that $H^i_{\fp}(R/x^nR) \longrightarrow H^i_{\fp}(R/xR)$ is surjective for all $i\geq 0$ and $n>0$, while $H^i_{\fp}(R) \xrightarrow{\times x} H^i_{\fp}(R) $ is not surjective. Indeed the map
$$
\begin{CD}
H^0_{\fp}(R) @>\times x>> H^0_{\fp}(R) \\
=@AAA= @AAA \\
\fq/\fp\fq @>\times x>> \fq/\fp\fq \\
\end{CD}
$$
is not surjective by Nakayama's lemma. But, we have the following diagram with exact rows:
$$
\begin{CD}
H^0_{\fp}(R/x^nR)	@>>> H^0_{\fp}(R/xR) \\
=@AAA= @AAA \\
R/x^nR @>>> R/xR @>>>0\\
\end{CD}
$$
Also, $H^i_{\fp}(R/x^nR)=H^i_{\fp}(R/xR)=0$ for all $i>0$ and $n>0$, because $R/x^nR$ is $\fp$-torsion. In sum, $H^i_{\fp}(R/x^nR) \longrightarrow H^i_{\fp}(R/xR)$ is surjective for all $i\geq 0$ and $n>0$.
\end{example}

\begin{example}
Set $R:=k[| x,y |]$ and $\fp:=(y)$. Then the following assertions hold:
\begin{enumerate}
\item[i)]
The natural map $H^i_{\fp}(R/x^nR)\to H^i_{\fp}(R/xR) $, which is induced by $R/x^nR \to R/xR$, is surjective for all $i\geq0$ and $n>0$.

\item[ii)]
The multiplication map $H^1_{\fp}(R)\xrightarrow{\times x} H^1_{\fp}(R) $ is not surjective.
\end{enumerate}
\end{example}

\begin{proof}i)
Recall that $H^2_{\fp}(R/x^nR)=H^2_{\fp}(R/xR)=0 $. Also,  $H^0_{\fp}(R/x^nR)=H^0_{\fp+(x^n)}(R/x^nR)=H^0_{\fm}(R/x^nR)=0 $, because $x^n,y$ is a regular sequence of length two. After localizing $R/x^nR\to R/xR\to 0$ at $y$, we get the exact sequence $R_y/x^nR_y\to R_y/xR_y\to 0$. Now, we deduce from the following diagram
$$
\begin{CD}
@. 0 @.0 @. \\
@. @AAA  @AAA    \\
0@>>> R/xR @>>> R_y/xR_y @>>> H^1_{\fp}(R/xR)@>>> 0\\
@. @AAA  @AAA f  @AAA   \\
0@>>> R/x^nR @> >> R_y/x^nR_y @>>> H^1_{\fp}(R/x^nR)@>>> 0,\\
\end{CD}
$$
that $f$ is surjective.

ii) We look at
$$
\begin{CD}
H^1_{\fp}(R) @>x>> H^1_{\fp}(R)\\
=@AAA= @AAA\\
\frac{R_y}{R}	 @>\times x>>  \frac{R_y}{R},\\
\end{CD}
$$
and we claim that the bottom map is not surjective. Suppose on the way of contradiction that $1/y+R$ is in the image, i.e., there is an $r\in R$ and an $n\in\mathbb{N}_0$ such that $x(r/y^n+R)=1/y+R$. In other words, there is an $s\in R$ such that $\frac{rx-y^{n-1}}{y^n}=s$. This in turn is equivalent to $rx=y^{n-1}(1-sy)$. By the UFD property of $R$, we get $x|(1-sy)$, a contradiction.
\end{proof}

This suggests a little more modification of Question \ref{Ques1} via imposing more restrictions on $x$:

\begin{question}
Let $x$ be a regular element in $I$. Suppose the natural map $H^i_{I}(R/x^nR)\to H^i_{I}(R/xR) $, which is induced by $R/x^nR \to R/xR$, is surjective for all $i\geq0$ and $n>0$. Then is $H^i_{I}(R)\xrightarrow{\times x} H^i_{I}(R) $ surjective?
\end{question}

By using an idea taken from \cite{MQ18}, we observe:

\begin{proposition}
\label{cr}
Suppose that the local ring $R$ is of prime characteristic $p$, $x\in I$ is a regular element and $H^{i}_{I}(R/x^{p^e}R) \to H^i_I(R/xR)$ is surjective for some $i\geq0$. Then
$H^{i+1}_{I}(R) \xrightarrow{\times x} H^{i+1}_{I}(R)$ is surjective.
\end{proposition}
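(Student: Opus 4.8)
The plan is to reduce the conclusion to the vanishing of a single natural map and then to play a Frobenius-twisted comparison of short exact sequences against an $R$-linear one. First I would make two reductions. Since a composite of surjections is surjective and $x^{p^e}H^{i+1}_I(R)\subseteq xH^{i+1}_I(R)$, the map $H^{i+1}_I(R)\xrightarrow{x}H^{i+1}_I(R)$ is surjective if and only if $H^{i+1}_I(R)\xrightarrow{x^{p^e}}H^{i+1}_I(R)$ is; and applying the long exact sequence $(\ref{longcoh1})$ to $0\to R\xrightarrow{x^{p^e}}R\to R/x^{p^e}R\to 0$ (exact on the left because $x^{p^e}$ is a nonzerodivisor) the latter holds exactly when the natural map $H^{i+1}_I(R)\to H^{i+1}_I(R/x^{p^e}R)$ is zero. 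So it suffices to prove that this natural map vanishes.

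Next I would set up two morphisms between the short exact sequences $0\to R\xrightarrow{x}R\to R/xR\to 0$ and $0\to R\xrightarrow{x^{p^e}}R\to R/x^{p^e}R\to 0$. Using only the factorization $x^{p^e}=x\cdot x^{p^e-1}$, the triple $(\cdot x^{p^e-1},\,\operatorname{id}_R,\,\pi)$ — with $\pi$ the canonical surjection occurring in the hypothesis — is a morphism from the second sequence to the first. In the other direction, because $R$ has characteristic $p$ the $p^e$-th power map $r\mapsto r^{p^e}$ is additive and satisfies $(xr)^{p^e}=x^{p^e}r^{p^e}$, so the triple in which every vertical map is $r\mapsto r^{p^e}$ is a morphism from the first sequence to the second. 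Applying $H^{\bullet}_I(-)$ to each yields a commutative ladder of long exact sequences. From the first ladder one obtains $\delta\circ\pi_*=x^{p^e-1}\cdot\delta'$ in degree $i$, where $\delta\colon H^i_I(R/xR)\to H^{i+1}_I(R)$ and $\delta'\colon H^i_I(R/x^{p^e}R)\to H^{i+1}_I(R)$ are the connecting maps; since $\im\delta=\ker\bigl(x\mid H^{i+1}_I(R)\bigr)$ and $\im\delta'=\ker\bigl(x^{p^e}\mid H^{i+1}_I(R)\bigr)$, and $\pi_*$ is onto in degree $i$ by hypothesis, it follows that multiplication by $x^{p^e-1}$ carries $\ker\bigl(x^{p^e}\mid H^{i+1}_I(R)\bigr)$ onto $\ker\bigl(x\mid H^{i+1}_I(R)\bigr)$.

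Finally I would feed this into the second (Frobenius) ladder. Applying $H^{\bullet}_I(-)$ to the Frobenius morphism produces the Frobenius action $\vartheta\colon H^{i+1}_I(R)\to H^{i+1}_I(R)$ (additive, with $\vartheta(x^am)=x^{ap^e}\vartheta(m)$) together with the compatibility of $\vartheta$ with the connecting maps and with the natural maps $H^{i+1}_I(R)\to H^{i+1}_I(R/xR)$, $H^{i+1}_I(R)\to H^{i+1}_I(R/x^{p^e}R)$. Transporting the surjectivity statement about $\ker(x^{p^e}\mid H^{i+1}_I(R))\to\ker(x\mid H^{i+1}_I(R))$ obtained above through this equivariance should force the image of $H^{i+1}_I(R)\to H^{i+1}_I(R/x^{p^e}R)$ to vanish, which by the first reduction completes the proof. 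I expect this last step to be the main obstacle: producing the identity about the $x$-torsion of $H^{i+1}_I(R)$ is purely homological and uses neither the characteristic nor the Frobenius, whereas converting it into divisibility by $x$ — equivalently into the vanishing of the natural map to $H^{i+1}_I(R/x^{p^e}R)$ — is precisely the point at which the characteristic $p$ and the idea of \cite{MQ18} are genuinely needed.
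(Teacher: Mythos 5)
Your first two steps are correct: the reduction of the conclusion to the vanishing of the natural map $H^{i+1}_I(R)\to H^{i+1}_I(R/x^{p^e}R)$ is sound, and the ladder built from the morphism $(\cdot\, x^{p^e-1},\id,\pi)$ of short exact sequences does yield $\delta\circ\pi_*=x^{p^e-1}\delta'$ and hence
\[
\bigl(0:_{H^{i+1}_I(R)}x\bigr)=x^{p^e-1}\bigl(0:_{H^{i+1}_I(R)}x^{p^e}\bigr).
\]
But the third step is where the proof has to happen, and it is missing: ``transporting through equivariance should force the image to vanish'' is not an argument, as you yourself acknowledge. Worse, it cannot be completed from the data you have collected, because nowhere in your outline do you use the hypothesis $x\in I$, and the proposition is false without it. The example in Section 3 with $R=k[[x,y]]$ and $\fp=(y)$ (which one may take over a field of characteristic $p$) satisfies every hypothesis you actually invoke --- $x$ is a regular element and all the maps $H^i_{\fp}(R/x^nR)\to H^i_{\fp}(R/xR)$ are surjective --- yet $H^1_{\fp}(R)\xrightarrow{x}H^1_{\fp}(R)$ is not surjective. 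In that example $x$ acts injectively on $H^1_{\fp}(R)=R_y/R$, so your torsion identity reads $0=0$: it is vacuous precisely where the conclusion fails, and no Frobenius equivariance applied to it can produce surjectivity.

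The ingredient you are missing is that $x\in I$ forces $H^{i+1}_I(R)$ to be $x$-power torsion (equivalently $H^{j}_I(R_x)=0$ for all $j$), which is what allows torsion information to govern divisibility by $x$. The paper's route is the following: by factoring $R/x^{p^e}R\twoheadrightarrow R/xR$ through $R/x^hR$, the natural maps $H^i_I(R/x^hR)\to H^i_I(R/xR)$ are surjective, so the connecting maps out of $H^i_I(R/xR)$ in the sequences attached to $0\to R/x^lR\xrightarrow{x}R/x^{l+1}R\to R/xR\to 0$ vanish and the transition maps $H^{i+1}_I(R/x^lR)\xrightarrow{x}H^{i+1}_I(R/x^{l+1}R)$ are injective. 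Hence $H^{i+1}_I(R/xR)$ injects into $\varinjlim_n H^{i+1}_I(R/x^nR)\cong H^{i+1}_I(H^1_x(R))\cong H^{i+2}_I(R)$, where the last isomorphism uses $x\in I$; this composite is the connecting map $\delta$ of $0\to R\xrightarrow{x}R\to R/xR\to 0$, so $\im\pi_{i+1}=\ker\delta=0$ and multiplication by $x$ is surjective on $H^{i+1}_I(R)$. If you want to salvage your line of attack, you must at minimum combine your step-two identity (iterated to the powers $x^{p^{ke}}$) with the $x$-power-torsionness of $H^{i+1}_I(R)$; at that point you will essentially have reconstructed this direct-limit argument.
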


\begin{proof} 
Note that $R/x^{p^{e}}R \twoheadrightarrow R/xR$ factors through the surjective map $R/x^{h}R \twoheadrightarrow R/xR$ for any $1 \leq h \leq p^{e}$. Then the induced map $H^{i}_{I}(R/x^{p^{e}}R) \to H^{i}_{I}(R/xR)$ factors through the surjective map $H^{i}_{I}(R/x^{h}R) \to H^{i}_{I}(R/x R)$. Next we show the following claim.

\begin{claim}
\label{subclaim1}
For each $l >0$ and $j \geq l$, the multiplication map $R/x^{l}R \xrightarrow{\times x^{j-l}} R/x^{j}$ induces an injective map $H^{i+1}_{I}(R/x^{l}R) \xrightarrow{\times x^{j-l}} H^{i+1}_{I}(R/x^{j}R)$.
\end{claim}

\begin{proof}[Proof of Claim 1]
Indeed, it suffice to show by induction the case $j=l+1$. The short exact sequence
$$
0 \lo R/x^{l}R  \xrightarrow{\times x} R/x^{l+1}R \lo R/xR \lo 0
$$
induces the long exact sequence of local cohomology modules
$$
\cdots \lo H^{i}_{I}(R/x^{l+1}R) \stackrel{\phi_{i}}\lo H^{i}_{I}(R/xR) \stackrel{\delta_{i}}\lo H^{i+1}_{I}(R/x^{l}R) \lo H^{i+1}_{I}(R/x^{l+1}R) \lo \cdots.
$$
Then since $\phi_{i}$ is surjective, $\delta_{i}$ is a zero map. So $H^{i+1}_{I}(R/x^{l}R) \to H^{i+1}_{I}(R/x^{l+1}R)$ is injective.
\end{proof}

Since $x$ is regular, the short exact sequence $0 \to R \xrightarrow{\times x} R \to R/xR \to 0$ induces the long exact sequence of local cohomology modules
$$
\cdots \lo H^{i+1}_{I}(R) \xrightarrow{\times x} H^{i+1}_{I}(R) \xrightarrow{\pi_{i+1}} H^{i+1}_{I}(R/xR) \stackrel{\delta} \lo {} H^{i+2}_{I}(R) \lo \cdots.
$$
By Claim \ref{subclaim1}, we obtain an injection $H^{i+1}_{I}(R/x^{l}R) \xrightarrow{\times x} H^{i+1}_{I}(R/x^{l+1}R)$ for any $l>0$. This implies that $\{H^{i+1}_{I}(R/x^k R)\}_{k \geq 0}$ forms an injective direct system.  Then 	
\begin{equation}
\label{localExactSeq1}
H^{i+1}_I(R/xR) \hookrightarrow
\varinjlim_n H^{i+1}_I(R/x^nR)  \stackrel{\cong}\lo  H^{i+1}_I(H^{1}_x(R)).
\end{equation}
Recall that $0\to R\to R_x\to H^{1}_x(R)\to 0$ induces
$$
H^{i}_I(R_x)\lo H^{i}_I(H^{1}_x(R)) \lo H^{i+1}_I(R)\lo H^{i+1}_I(R_x).
$$
Since $x\in I$, it follows that $H^{i+1}_I(R_x)=H^{i}_I(R_x)=H^{i}_{IR_x}(R_x)=H^{i}_{R_x}(R_x)=0$, and so $H^{i}_I(H^{1}_x(R)) \cong H^{i+1}_I(R)$. Combining this with $(\ref{localExactSeq1})$, we observe that $H^{i+1}_I(R/xR) \xrightarrow{\delta} H^{i+2}_I(R)$ is injective. Therefore, we obtain $\im\pi_{i+1}=\ker\delta=0$. This implies that $H^{i+1}_{I}(R) \xrightarrow{\times x} H^{i+1}_{I}(R)$ is surjective, as claimed.
\end{proof}

\section{Application to vanishing of local cohomology modules}

\subsection{Proof of Theorem \ref{svt}}

First we define two graphs. These are important to prove the main theorem.

\begin{definition}\label{graph}
Let $(R,\fm ,k)$ be a local ring.
\begin{enumerate}
\item
Let $\fp_1,\ldots,\fp_t$ be the set of minimal primes of $R$. Then the graph $\Theta_R$ has vertices labeled $1,\ldots,t$ and there is an edge between two (distinct) vertices $i$ and $j$, precisely when $\fp_i+\fp_j$ is not $\fm$-primary.

\item(\cite[Definition 3.4]{HH94})
Let $\fp_{1}, \ldots, \fp_{r}$ be the set of minimal primes of $R$ such that $\dim(R/\fp_{i})=\dim(R)$. Then the Hochster-Huneke graph of $R$, which is denoted by $\Gamma_{R}$, has vertices $1,\ldots,r$ and there is an edge between two (distinct) verticies $i$ and $j$, precisely if $\fp_{i}+\fp_{j}$ has height one.
\end{enumerate}
\end{definition}

Huneke and Lyubeznik pointed out the importance of the graph $\Theta_{R}$ when we investigate the connectedness of punctured spectra.

\begin{lemma}{$($\textrm{Huneke-Lyubeznik}$)$}
\label{graph1}
Let $(R,\fm,k)$ be a local ring. Then $\Spec^{\circ}(R)$ is connected if and only if  $\Theta_R$ is connected.
\end{lemma}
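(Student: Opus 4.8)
The plan is to prove Lemma \ref{graph1} by relating both connectedness conditions to a common combinatorial criterion about the minimal primes $\fp_1,\dots,\fp_t$ of $R$.

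First I would unwind the definition of connectedness of $\Spec^{\circ}(R)$ given in the text. Suppose $\Spec^{\circ}(R)$ is disconnected; then there are ideals $\fa,\fb$ with $\sqrt{\fa\cap\fb}=\sqrt{\fa\fb}=\sqrt{0}$ (taking $I = 0$ here, since we are looking at $\Spec^\circ(R)$ itself), with $\sqrt{\fa+\fb}=\fm$, and with neither $\sqrt{\fa}$ nor $\sqrt{\fb}$ equal to $\fm$. Since $\fa\fb\subseteq\sqrt{0}=\fp_1\cap\dots\cap\fp_t$, each minimal prime $\fp_i$ contains $\fa$ or contains $\fb$ (primes are prime), so the vertex set $\{1,\dots,t\}$ splits as $S\sqcup T$ where $i\in S$ means $\fa\subseteq\fp_i$ and $i\in T$ means $\fb\subseteq\fp_i$ (put ties arbitrarily into $S$). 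Both $S$ and $T$ are nonempty: if, say, $T=\emptyset$ then $\fb\subseteq\sqrt{\fb}\subseteq\bigcap_i\fp_i=\sqrt 0$, and then $\sqrt{\fa}\supseteq\sqrt{\fa+\fb}=\fm$ forces $\sqrt{\fa}=\fm$, contrary to assumption. Conversely, given any partition of the vertices into nonempty $S\sqcup T$ with no edge between $S$ and $T$ — meaning $\fp_i+\fp_j$ is $\fm$-primary whenever $i\in S$, $j\in T$ — I would set $\fa:=\bigcap_{i\in S}\fp_i$ and $\fb:=\bigcap_{j\in T}\fp_j$; then $\sqrt{\fa\cap\fb}=\sqrt 0$, while $\sqrt{\fa+\fb}=\bigcap_{i\in S,\,j\in T}\sqrt{\fp_i+\fp_j}=\fm$ (a finite intersection of $\fm$-primary ideals is $\fm$-primary), and neither radical is $\fm$ because $\fa\subseteq\fp_i\subsetneq\fm$ for $i\in S$ and likewise for $\fb$. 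Hence $\Spec^{\circ}(R)$ is disconnected precisely when the vertices of $\Theta_R$ admit a partition into two nonempty parts with no connecting edge, i.e. precisely when $\Theta_R$ is disconnected.

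Combining the two implications, $\Spec^{\circ}(R)$ is connected if and only if $\Theta_R$ is connected, which is the claim. The main obstacle — really the only subtle point — is the bookkeeping in the first direction: from a topological disconnection one must produce a genuine \emph{partition} of the vertices (handling primes that contain both $\fa$ and $\fb$) and then verify that both blocks are nonempty and that no edge crosses, i.e. that $\fp_i+\fp_j\supseteq\fa+\fb$ has radical $\fm$ for $i\in S$, $j\in T$. One should also note the degenerate case $t\le 1$: if $R$ has a unique minimal prime then $\Theta_R$ is a single vertex, trivially connected, and $\Spec^{\circ}(R)$ is connected as well (it is irreducible, or apply the Remark that a domain — after passing to $R/\sqrt 0$, which does not change either the punctured spectrum topology or $\Theta_R$ — has connected punctured spectrum). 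Everything else is the standard dictionary between radical ideals and closed subsets of $\Spec R$.
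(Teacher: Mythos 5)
The paper does not prove this lemma --- it is quoted from Huneke--Lyubeznik without proof --- so there is no in-paper argument to compare against. Your proof is the standard one and is correct: you translate the ideal-theoretic definition of connectedness of $\Spec^{\circ}(R)$ into a partition of the minimal primes and back, which is exactly how the cited result is established. One small slip: in checking that both blocks are nonempty, you write that $T=\emptyset$ gives $\fb\subseteq\sqrt{0}$; in fact $T=\emptyset$ means every minimal prime lies in $S$, i.e.\ contains $\fa$, so it is $\fa\subseteq\bigcap_i\fp_i=\sqrt{0}$, and hence $\sqrt{\fb}=\sqrt{\fa+\fb}=\fm$, that yields the contradiction. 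Since the situation is symmetric in $\fa$ and $\fb$, this does not affect the argument. The rest --- the cross-edge check via $\fa+\fb\subseteq\fp_i+\fp_j$, the computation $\sqrt{\fa+\fb}=\bigcap_{i\in S,\,j\in T}\sqrt{\fp_i+\fp_j}$ in the converse direction, and the degenerate case $t\le 1$ --- is handled correctly.
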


The following lemma is important to prove Lemma \ref{vanishingmixed}.
\begin{lemma}
\label{graph2}
Let $(R,\fm,k)$ be a complete local domain and let $x \in \fm$ be a nonzero element. Then  $\Gamma_{R/xR}$  is connected.
\end{lemma}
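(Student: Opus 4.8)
The plan is to reduce the statement to the connectedness of the punctured spectrum of a suitable quotient, and then apply Lemma \ref{graph1}. Since $R$ is a complete local domain and $x \in \fm$ is nonzero, the ring $R/xR$ is equidimensional of dimension $\dim R - 1$ (indeed, every minimal prime $\fp_i$ of $xR$ has $\Ht \fp_i = 1$ by Krull's principal ideal theorem, so $\dim(R/\fp_i) = \dim R - 1$ because $R$ is catenary and equidimensional as a complete local domain). Hence the vertices of $\Gamma_{R/xR}$ are \emph{all} the minimal primes $\fp_1,\dots,\fp_r$ of $xR$, and an edge joins $i$ and $j$ when $\Ht(\fp_i + \fp_j) = 1$, i.e. when $\fp_i + \fp_j$ is contained in some height-one prime, equivalently (since $R$ is a UFD-like domain? no — rather) when $\fp_i$ and $\fp_j$ lie in a common height-one prime of $R$. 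The first step is thus to set up this dictionary between edges of $\Gamma_{R/xR}$ and incidences of the $\fp_i$ inside height-one primes.

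Next I would invoke the connectedness-dimension machinery. The key classical input is that in a complete local domain $R$, the punctured spectrum is connected in codimension $1$: more precisely, one uses that $H^0_\fm(R) = H^1_\fm(R) = 0$ when $\dim R \ge 2$ (as $R$ is a domain, hence $S_2$ after normalization, or directly since a complete local domain satisfies Serre's condition enough for this) to conclude, via Grothendieck's finiteness / the Hartshorne connectedness criterion, that $\Spec^\circ(R/xR)$ is connected. Then by Lemma \ref{graph1} applied to the local ring $R/xR$, the graph $\Theta_{R/xR}$ is connected. Finally I would upgrade from $\Theta_{R/xR}$ to $\Gamma_{R/xR}$: $\Theta_{R/xR}$ has an edge between $i$ and $j$ whenever $\fp_i + \fp_j$ is not $\fm$-primary, i.e. whenever $\dim R/(\fp_i+\fp_j) \ge 1$; by Grothendieck's connectedness theorem (the analytic irreducibility / dimension-of-intersection bound for complete local domains), in fact $\dim R/(\fp_i+\fp_j) \ge \dim R/\fp_i + \dim R/\fp_j - \dim R = (\dim R - 1) + (\dim R - 1) - \dim R = \dim R - 2$, which forces $\Ht(\fp_i+\fp_j) \le 2$; a further local argument (passing to the appropriate height-one localization, or a connectedness-in-codimension-one statement for $R$) pins this down to $\Ht(\fp_i+\fp_j) = 1$ whenever there is a $\Theta$-edge, so the two graphs coincide, or at least $\Gamma_{R/xR}$ is connected whenever $\Theta_{R/xR}$ is.

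The main obstacle will be the last step: matching $\Theta_{R/xR}$ with $\Gamma_{R/xR}$, i.e. showing that every pair of adjacent components in $\Theta_{R/xR}$ is in fact adjacent in the \emph{height-one} sense. The cheap inequality $\Ht(\fp_i+\fp_j)\le 2$ from Grothendieck's theorem is not quite enough on its own; one needs to exploit that $R$ is a domain (so connected in codimension $1$: $\Spec R \setminus \{\text{height} \le 1 \text{ primes}\}$ is connected, equivalently any two height-one primes are linked by a chain through height-one primes whose pairwise sums have height $\le 1$ after localizing at a height-two prime — this is where the real content of \cite[Proposition 3.5]{HBPW18} lies). Concretely, I expect the argument to localize $R$ at an arbitrary height-two prime $\fq$ containing $\fp_i + \fp_j$, reduce to the two-dimensional complete local domain $R_\fq$, and there verify directly that the minimal primes of $x R_\fq$ whose sum has height $2$ cannot disconnect the graph — using that a $2$-dimensional complete local domain has connected punctured spectrum. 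Alternatively, one cites the known fact that $\Gamma_{R/xR}$ connected is equivalent to $R/xR$ being connected in codimension $1$, and then proves the latter from analytic irreducibility of $R$ directly; I would write the proof in whichever of these two forms is shortest given the tools already available in the excerpt, namely Lemma \ref{graph1}.
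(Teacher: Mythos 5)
The paper does not actually prove this lemma --- it is quoted from \cite[Proposition 3.5]{HBPW18} --- so your argument has to stand on its own, and it does not: the decisive step is missing. Your main route is to show $\Spec^{\circ}(R/xR)$ is connected, conclude via Lemma \ref{graph1} that $\Theta_{R/xR}$ is connected, and then ``upgrade'' to $\Gamma_{R/xR}$. But the implication between the two graphs runs the other way. Writing $n=\dim(R/xR)$ and using that $R/xR$ is equidimensional and catenary, a $\Gamma$-edge between $\fp_i$ and $\fp_j$ means $\dim R/(\fp_i+\fp_j)=n-1$, whereas a $\Theta$-edge means only $\dim R/(\fp_i+\fp_j)\ge 1$; hence for $n\ge 2$ the graph $\Gamma_{R/xR}$ is a \emph{subgraph} of $\Theta_{R/xR}$ (this is exactly how the paper uses the two graphs in the proof of Lemma \ref{vanishingmixed}), and connectedness of $\Theta_{R/xR}$ says nothing about $\Gamma_{R/xR}$. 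The passage from ``not $\fm$-primary'' to ``height exactly one'' is the entire content of the lemma, and you leave it open. The inequality you invoke to bridge the gap, $\dim R/(\fp_i+\fp_j)\ge \dim R/\fp_i+\dim R/\fp_j-\dim R$, is Serre's intersection bound and is \emph{false} for general complete local domains: in $R=k[[a,b,c,d]]/(ad-bc)$ the height-one primes $(a,b)$ and $(c,d)$ sum to $\fm$, violating it. (Even where it holds it yields only $\Ht(\fp_i+\fp_j)\le 2$, as you concede.) Your first step is also off: Hartshorne's criterion requires $\depth(R/xR)\ge 2$, which the vanishing of $H^0_{\fm}(R)$ and $H^1_{\fm}(R)$ does not give; and for $R=k[[u,v]]$, $x=uv$, the punctured spectrum of $R/xR$ is a two-point discrete space, hence disconnected, while $\Gamma_{R/xR}$ is connected there because $\Ht_{R/xR}(\fm)=1=\dim(R/xR)$.

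The correct argument is the one you relegate to a closing ``alternatively'' without carrying it out: Grothendieck's connectedness theorem gives that cutting a complete local domain of dimension $d$ by one element produces a ring of connectedness dimension at least $d-2$, i.e.\ $R/xR$ is connected in codimension one, and by Hochster--Huneke \cite{HH94} a complete equidimensional local ring is connected in codimension one if and only if its graph $\Gamma$ is connected. Both ingredients are substantive theorems that your sketch neither proves nor reduces to the tools available here (Lemma \ref{graph1} concerns $\Theta$, not $\Gamma$), so the proposal has a genuine gap rather than a fixable slip.
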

\begin{proof}
This is in \cite[Proposition 3.5]{HBPW18}  with some contributions to \cite[Proposition 2.2]{Zha} and \cite[Expos\'{e} XIII, Th\'{e}or\`{e}me 2.1]{sga2} by Grothendieck.
\end{proof}

We prove the following lemma which is similar to \cite[Lemma 3.7]{HBPW18}, but the proof is different in nature.

\begin{lemma}
\label{vanishingmixed}
Let $(R,\fm,k)$ be a d-dimensional complete regular local ring of mixed characteristic with separably closed residue field $k$. Let $\fq$ be a prime ideal of $R$ such that $\dim(R/\fq) \ge 3$. Put $J:=pR+\fq$. Assume $\ell_R(H^{2}_\fm(R/J))<\infty$. Then $H_{\fq}^{d-1}(R)=0$.
\end{lemma}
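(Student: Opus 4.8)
The plan is to reduce the mixed-characteristic statement to a positive-characteristic computation by passing to $R/pR$, which is a complete regular local ring of characteristic $p>0$, and then exploit the surjectivity from Theorem \ref{surjective2}. First I would set $\bar R := R/pR$ and let $\bar\fq := \fq/pR$ when $p\in\fq$ (and handle $p\notin\fq$ separately using the long exact sequence \eqref{longcoh2}). The key observation is that $R/J = \bar R/\bar\fq$ and the hypothesis $\ell_R(H^{2}_\fm(R/J))<\infty$ says exactly that $\ell_{\bar R}(H^{2}_{\bar\fm}(\bar R/\bar\fq))<\infty$, with $\dim\bar R = d-1$ and $\dim(\bar R/\bar\fq) = \dim(R/\fq)\ge 3$. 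Applying Theorem \ref{surjective2} to $\bar R$ with the index $i$ determined by $(d-1)-i = 2$, i.e. $i = d-3$, gives that $H^{d-3}_{\bar\fq}(\bar R)$ is divisible, hence that multiplication by any nonzero element of $\bar R$ is surjective on it. I expect the relevant vanishing target to actually be $H^{d-1}_\fq(R)$, so I need to connect $H^{d-1}_\fq(R)$ over $R$ to local cohomology over $\bar R$.

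The bridge is the long exact sequence \eqref{longcoh2} for the element $p$ and the ideal $\fq$: since $p\in\fq$ we have $\fq + pR = \fq$, so \eqref{longcoh2} reads $\cdots\to H^{i}_\fq(R)\to H^{i}_\fq(R)\to H^{i}_\fq(R_p)\to H^{i+1}_\fq(R)\to\cdots$, where the first map is multiplication by $p$. Thus $H^{d-1}_\fq(R)=0$ would follow if I can show both that $H^{d-1}_\fq(R_p)=0$ and that multiplication by $p$ is surjective on $H^{d-1}_\fq(R)$ — or, more efficiently, I would instead use the sequence $0\to R\xrightarrow{p} R\to \bar R\to 0$, giving $H^{d-2}_\fq(\bar R)\to H^{d-1}_\fq(R)\xrightarrow{p} H^{d-1}_\fq(R)\to H^{d-1}_\fq(\bar R)$. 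By the independence theorem $H^{j}_\fq(\bar R) = H^{j}_{\bar\fq}(\bar R)$, which vanishes for $j > \dim\bar R - \operatorname{grade} = \cdots$; in particular $H^{d-1}_{\bar\fq}(\bar R)=0$ since $d-1 > \cd(\bar\fq,\bar R)$ (because $\dim(\bar R/\bar\fq)\ge 3$ forces $\cd(\bar\fq,\bar R)\le d-1-3+1$ — here I would invoke a Lichtenbaum–Hartshorne / Grothendieck-type bound, or more carefully the vanishing $\cd(\bar\fq,\bar R)\le d-1 - 1$ whenever $\dim(\bar R/\bar\fq)\ge 2$). So the right column vanishes and I get a surjection $H^{d-1}_\fq(R)\xrightarrow{p} H^{d-1}_\fq(R)$, while $H^{d-2}_\fq(\bar R) = H^{d-2}_{\bar\fq}(\bar R)$, which I want to control using the finite-length hypothesis.

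The crux is to show $H^{d-1}_\fq(R) = 0$ outright, and for this I would argue that $H^{d-1}_\fq(R)$ is divisible over $R$ and simultaneously killed by a power of $p$, forcing it to be zero. Divisibility: I would like to transfer injectivity/divisibility of $H^{d-3}_{\bar\fq}(\bar R)$ (from Theorem \ref{surjective2} applied to $\bar R$) up to $H^{d-1}_\fq(R)$ via the comparison $H^{i}_\fq(R)\cong H^{i}_{IR_p}\cdots$; concretely, the long exact sequence $0\to R\xrightarrow{p} R\to\bar R\to 0$ together with $H^{d-1}_{\bar\fq}(\bar R)=0$ identifies $H^{d-1}_\fq(R)$ with a quotient of $H^{d-1}_\fq(R)$ by its $p$-torsion-free part being zero, and iterating with \eqref{longcoh2} for $R_p$ one sees $H^{d-1}_\fq(R) = H^{d-1}_\fq(R_p)$-controlled. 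Since $R_p$ has Krull dimension $d-1$ (we killed $p$), $H^{d-1}_\fq(R_p) = 0$ by Grothendieck vanishing if $\dim R_p/\fq R_p < d-1$, which holds as $\dim(R/\fq)\ge 3 > 1$. Then the surjection $H^{d-1}_\fq(R)\xrightarrow{p}H^{d-1}_\fq(R)$ with $\bigcap_n p^n H^{d-1}_\fq(R)$ trivial (as an Artinian consideration or via Nakayama after checking artinianness of $H^{d-1}_\fq(R)$, which follows from $\ell_R(H^{2}_\fm(R/J))<\infty$ analogously to the proof of Theorem \ref{surjective2}) forces $H^{d-1}_\fq(R)=0$.

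\textbf{Main obstacle.} The delicate point is the passage between local cohomology over $R$ and over $\bar R = R/pR$: I must check that the finite-length hypothesis $\ell_R(H^{2}_\fm(R/J))<\infty$, which by design feeds Theorem \ref{surjective2} over $\bar R$ to yield an injective (hence divisible) module $H^{d-3}_{\bar\fq}(\bar R)$, really does propagate to make $H^{d-1}_\fq(R)$ Artinian over $R$ and divisible, rather than only controlling $H^{d-2}_\fq(\bar R)$. Getting the bookkeeping of the two long exact sequences (the $p$-multiplication sequence $0\to R\xrightarrow{p}R\to\bar R\to 0$ and the localization sequence \eqref{longcoh2} at $p$) consistent — and pinning down the correct cohomological vanishing bound $\cd(\bar\fq,\bar R)\le d-2$ from $\dim(\bar R/\bar\fq)\ge 2$ — is where the argument must be done carefully; the arithmetic with indices ($d$ versus $d-1$, and $i = d-3$ versus $d-1$) is the kind of thing that is easy to get off by one.
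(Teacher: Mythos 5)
Your reduction has a fatal flaw at the very first step: you declare $\bar R := R/pR$ to be ``a complete regular local ring of characteristic $p>0$'' and then feed it into Theorem \ref{surjective2}. But $R/pR$ is regular only when $R$ is unramified; in the ramified case $p\in\fm^2$, so $R/pR$ is singular, and the ramified case is precisely the one this lemma is meant to cover (the unramified case is Zhang's theorem). Theorem \ref{surjective2} and the Huneke--Sharp/Lyubeznik machinery behind it require regularity, so nothing you derive about $H^{*}_{\bar\fq}(\bar R)$ is justified. The paper's key idea, which you are missing, is to use the Cohen presentation $R\cong C(k)[|x_1,\ldots,x_d|]/(p-f)$, so that $R/pR\cong S/\overline f S$ with $S=k[|x_1,\ldots,x_d|]$ a $d$-dimensional regular local ring of characteristic $p$. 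One then runs the long exact sequence of $0\to S\xrightarrow{\overline f}S\to R/pR\to 0$ in $H^{*}_{\fp}$, where $\fp\subset S$ is the preimage of $\fq(R/pR)$: Theorem \ref{surjective2} applied to $S$ with $i=d-2$ (note the hypothesis $\ell(H^2_\fm(R/J))<\infty$ is exactly $\ell(H^2_{\fm_S}(S/\fp))<\infty$ since $S/\fp\cong R/J$) makes $\overline f$ surjective on $H^{d-2}_\fp(S)$, and the equal-characteristic (SVT) gives $H^{d-1}_\fp(S)=0$; together these force $H^{d-2}_\fq(R/pR)=0$. Your index $i=d-3$ over a $(d-1)$-dimensional ring is aimed at the wrong module: what the bridge to $H^{d-1}_\fq(R)$ needs is the vanishing of $H^{d-2}_\fq(R/pR)$, not divisibility of $H^{d-3}$.

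Your final descent step is also in the wrong direction. From $H^{d-2}_\fq(R/pR)=0$ one gets that multiplication by $p$ is \emph{injective} on $H^{d-1}_\fq(R)$; since this module is $\fq$-torsion and $p\in\fq$, every element is killed by a power of $p$, and injectivity then forces the module to vanish. You instead try to conclude from \emph{surjectivity} of $p$ plus ``$\bigcap_n p^nH^{d-1}_\fq(R)$ trivial'' or a Nakayama/artinianness argument; this does not work: a $p$-divisible, $p$-power-torsion module need not be zero (the Pr\"ufer module $R_p/R$ for a DVR is the standard counterexample), and artinian modules do not satisfy Nakayama ($\fm E_R(k)=E_R(k)$). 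Finally, the case $p\notin\fq$ is not a routine appeal to \eqref{longcoh2}: there one must prove that $\Spec^{\circ}(R/J)$ is connected (the paper does this via the Hochster--Huneke graph and Lemma \ref{graph2} applied to the domain $R/\fq$ with $x=p$) before (SVT) over $S$ can be invoked for $\overline J$, and one must separately kill $H^{d-1}_\fq(R_p)$ by localizing and applying Hartshorne--Lichtenbaum. None of these steps appear in your outline.
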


\begin{proof}
Since $R$ is a complete regular local ring of mixed characteristic, it is isomorphic to 
$$
C(k)[|x_{1}, \ldots ,x_{d}|]/(p-f)
$$
where $f$ is an element in $(p, x_1,\cdots,x_d)^2 \backslash pR$ or $f=x_{1}$, and $C(k)$ is a complete discrete valuation ring such that $C(k)/pC(k) \cong k$.

First consider the case $p \in \fq$. Put $S := k[|x_{1}, \ldots , x_{d}|]$. Then we have the isomorphism $R/pR \cong S/\overline{f}S$ where $\overline{f}$ is the image of $C(k)[|x_{1},\ldots,x_{d}|] \twoheadrightarrow k[|x_{1},\ldots,x_{d}|]$. Thus we obtain the short sequence
\begin{equation}
\label{shortexactsequence1}
0 \lo S \xrightarrow{\times \overline{f}} S \lo R/pR \lo 0.
\end{equation}
Applying the long exact sequence for (\ref{shortexactsequence1}), we get
\begin{equation}
\label{longexactsequence1}
\cdots \lo H^{i}_{\fp}(S) \xrightarrow{\times \overline{f}} H^{i}_{\fp}(S) \lo H^{i}_{\fq}(R/pR) \lo H^{i+1}_{\fp}(S) \lo \cdots,
\end{equation}
where $\fp \subset S$ is the inverse image of the ideal $\fq(R/pR)$ under $S \twoheadrightarrow R/pR$. Let us show the vanishing $H^{d-2}_{\fq}(R/pR) = 0$. Since $p$ is contained in $\fq$ and $\fq$ is a prime ideal, the punctured spectrum of $(R/pR)/(\fq/pR) \cong R/\fq$ is connected. In addition, we have the inequality $\dim (R/\fq) \geq 2$. So we obtain $H^{d-1}_{\fp}(S)=0$ by (SVT) (see  \cite[Theorem 2.9]{HL90}). Moreover, since $S/\overline{f}S \cong R/pR$ and $\overline{f} \in S$ is a nonzero element by assumption, it follows from Theorem \ref{surjective2} that the multiplication map $H^{d-2}_{\fp}(S) \xrightarrow{\times \overline{f}} H^{d-2}_{\fp}(S)$ in $(\ref{longexactsequence1})$ is surjective. To summarize the above, we obtain
\begin{equation}
\label{longexactsequence2}
\cdots \lo H^{d-2}_{\fp}(S) \xrightarrow{\times \overline{f}} H^{d-2}_{\fp}(S) \lo H^{d-2}_{\fq}(R/pR) \lo H^{d-1}_{\fp}(S)=0.
\end{equation}
Remarking that $H^{d-2}_{\fp}(S) \to H^{d-2}_{\fq}(R/pR)$ is trivial by the surjectivity of $H^{d-2}_{\fp}(S) \xrightarrow{\times \overline{f}} H^{d-2}_{\fp}(S)$, we see that $H^{d-2}_{\fq}(R/pR) \to H^{d-1}_{\fp}(S)=0$ is injective. This implies that $H^{d-2}_{\fq}(R/pR) = 0$. Finally, attached to the short exact sequence: $0 \to R \xrightarrow{\times p} R \to R/pR \to 0$, we get the exact sequence
\begin{equation}
\label{shortexactsequence2}
0 \lo H^{d-1}_\fq(R) \xrightarrow{\times p} H^{d-1}_\fq(R).
\end{equation}
Assume that $H^{d-1}_\fq(R) \neq 0$. Since $H^{d-1}_\fq(R)$ is $\fq$-torsion and $p \in \fq$, every element of $H^{d-1}_\fq(R)$ is annihilated by $p^n$ for some $n>0$. This gives a contradiction to the injectivity of $H^{d-1}_\fq(R) \xrightarrow{\times p} H^{d-1}_\fq(R)$. Thus we obtain the vanishing $H^{d-1}_\fq(R) = 0$.

Next we consider the case $p \notin \fq$. For distinct minimal primes $\fp_{1}$ and $\fp_{2}$ of $\Spec(R/J)$, if $\Ht_{R/J}(\fp_{1} + \fp_{2})=1$, then $\fp_{1} + \fp_{2}$ can not be primary to the maximal ideal of $R/J$, since we have the inequality $\dim(R/J) \geq 2$. This implies that  $\Gamma_{R/J}$ is a subgraph of $\Theta_{R/J}$. Moreover, the two graphs $\Gamma_{R/J}$ and $\Theta_{R/J}$ have the same vertices. Indeed, since $R/\fq$ is a complete local domain which is catenary, $R/J$ is equidimensional. 

Since $R/\fq$ is a complete local domain,   $\Gamma_{R/J}$ is connected by Lemma $\ref{graph2}$ by taking $x=p$. Since $\Gamma_{R/J}$ and $\Theta_{R/J}$ have the same vertices, $\Theta_{R/J}$ is also connected. This implies that $\Spec^{\circ}(R/J)$ is connected by Lemma $\ref{graph1}$. Remark that $R/J \cong S/\overline{J}$, where $\overline{J} \subset S$ is the inverse image of the ideal $J(R/pR)$ under $S \twoheadrightarrow R/pR$. and $\dim(R/J) \geq 2$. In view of (SVT) \cite[Theorem 2.9]{HL90},   we observe that $H^{d-1}_{\overline{J}}(S)=0$. Applying the above discussion replacing $\fp$ for $\overline{J}$, we obtain $H^{d-1}_{J}(R)=0$.

Finally, recall from (4.2) the following long exact sequence
$$
\cdots \lo H^{d-1}_{J}(R) \lo H^{d-1}_{\fq}(R) \lo H^{d-1}_{\fq}(R_{p}) \lo \cdots.
$$
We already proved $H^{d-1}_{J}(R)=0$. So it suffices to show that $H^{d-1}_{\fq} (R_{p})=0$. Since $p$ is in the maximal ideal $\fm$, we obtain $\dim (R_{p})=d-1$ and $\dim(R_{p}/\fq R_{p}) \geq 2$. It then suffices to show that the localization of  $H^{d-1}_{\fq}(R_{p})$ at every prime $\fp \in \Spec R$ vanishes, where $p \notin \fp$, $\fq \subset \fp$ and $\dim (R_\fp) = d-1$. Notice that $(R_{p})_{\fp} \cong R_{\fp}$ is a regular local ring of dimension $d-1$ and $\dim(R_\fp/\fq R_\fp) \ge 2$. We obtain $(H^{d-1}_{\fq}(R_{p}))_{\fp} \cong H^{d-1}_{\fq R_{\fp}}(R_{\fp})=0$ by the Hartshorne-Lichtenbaum vanishing theorem. Thus, $H^{d-1}_{\fq}(R_{p})=0$.
\end{proof}

Now we prove Theorem \ref{svt}. The proof is based on \cite[Theorem 3.8]{HBPW18}.

\begin{proof}[Proof of Theorem 1.1]
Suppose that $H^{d-1}_{I}(R)=0$. If $\Spec^{\circ}(R/I)$ is not connected, there exist ideals $J_1,J_2$ of $R$ such that $\sqrt{J_1+J_2} = \fm$ and $\sqrt{J_1 \cap J_2} = \sqrt{I}$, but $\sqrt{J_1}$ and $\sqrt{J_2}$ are not equal to both $\fm$ and $\sqrt{I}$. By $(\ref{longcoh3})$, we get 
$$
\cdots \lo H^{d-1}_{I}(R) \lo H^{d}_{\fm}(R) \lo H^{d}_{J_1}(R) \oplus H^{d}_{J_2}(R) \lo \cdots.
$$
Then we have $H^{d-1}_{I}(R)=0$ by assumption and $H^{d}_{J_1}(R) = H^{d}_{J_2}(R) = 0$ by the Hartshorne-Lichtenbaum vanishing theorem. This is a contradiction for $H^{d}_{\fm}(R) \neq 0$ by the Grothendieck's vanishing theorem.

Conversely, suppose that $\Spec^\circ(R/I)$ is connected. We proceed by induction on  $t:=|\Min(R/I)|$. The case that $t=1$ was established in Lemma \ref{vanishingmixed}. Assume that the implication holds for all ideals $\fa$ of $R$ with $t-1 \geq 0$ minimal primes, for which $\dim(R/\fp) \geq 3$ for some minimal primes $\fp$ of $\fa$.

Fix an ideal $I$ with $t$ minimal primes such that $\dim(R/\fp) \geq 3$ for any minimal prime $\fp$ of $I$, and for which $\Spec^\circ(R/I)$ is connected. Then $\Theta_{R/I}$ is also connected by Lemma \ref{graph1}. Thus, there is an ordering $\fq_1,\fq_2,\ldots,\fq_t$ of the minimal primes of $I$ such that the induced subgraph of $\Theta_{R/I}$ on $\fq_1,\fq_2,\ldots,\fq_i$ is connected for all $1 \leq i \leq t$. This means that given $1 \leq i \leq t$, if $J_i=\fq_1 \cap \cdots \cap \fq_i$, then  $\Theta_{R/J_i}$ is connected. So we deduce that $\Spec^{\circ}(R/J_i)$ is connected. Apply $(\ref{longcoh3})$ associated to $J_{t-1}$ and $\fq_t$ to get
$$
\cdots \lo H^{d-1}_{J_{t-1}}(R) \oplus H^{d-1}_{\fq_t}(R) \lo H^{d-1}_{I}(R) \lo H^{d}_{J_{t-1}+\fq_t}(R) \lo 0.
$$
By the inductive hypothesis, $H^{d-1}_{J_{t-1}}(R)=0$. In addition, $H^{d-1}_{\fq_t}(R)=0$ by Lemma \ref{vanishingmixed}. Moreover, since the punctured spectrum of $R/I$ is connected, $\sqrt{J_{t-1}+\fq_t} \subsetneqq \fm$, so that $H^{d}_{J_{t-1}+\fq_t}(R)=0$ by the Hartshorne-Lichtenbaum vanishing theorem, which proves $H^{d-1}_{I}(R)=0$.  
\end{proof}

\subsection{Examples}

Here, we construct some examples fitting into the setting of Theorem $\ref{svt}$.

\begin{example}
\label{connectedexam1}
Let $(R,\fm)$ be a local complete generalized Cohen-Macaulay integral domain in mixed characteristic with dimension at least four. 
By Cohen's structure theorem, there is a regular local ring $A$ together with an ideal $I$ such that $R=A/I$. Since $R$ is an integral domain, the ideal must be prime. Put $J:=p + I$. If $p$ is contained in $I$, then $\ell_R(H^2_\fm(A/J))$ is obviously finite (more precisely is zero) since $I=J$ and $R$ is generalized Cohen-Macaulay of dimension at least four.
If $p$ is not contained in $I$, then $p$ is a non-zero element in $A/I$.
We look at the short exact sequence $0 \to R \xrightarrow{\times p} R\to A/J \to 0$, and the induced   long exact sequence: $$\cdots \lo H^2_\fm(R) \stackrel{g}\lo H^2_\fm(A/J) \stackrel{f}\lo H^3_\fm(R) \lo \cdots.$$
We define
\begin{itemize}
\item[$\bullet$] $K:=\ker(f)$
\item[$\bullet$] $L:=\im(f)$
\end{itemize}
This fits in the following short exact sequence 
\begin{equation}\label{1003927}
0 \lo K \lo H^2_\fm(A/J) \lo L \lo 0.
\end{equation}
Note that the value of the length function of $R$ is equal to that of $S$.
Since $L \subseteq H^3_\fm(R)$, we know that $\ell_A(L)<\infty$.  Recall that $K=\ker(f)=\im(g) $ and that $H^2_\fm(A/I)\stackrel{g}\lo \im(g)\to 0$. We combine these along with $\ell_A( H^2_\fm(R))<\infty $, and deduce that $\ell_A( K)<\infty $. 
By plugging these in $(\ref{1003927})$, we observe that  $\ell_A(H^2_\fm(A/J))$ is finite. In particular, we are in the situation of Lemma \ref{vanishingmixed}. So we have $H^{d-1}_I(A) = 0$.

\end{example}

Next, we present the more explicit examples as follows:

\begin{example}
\label{connectedexam2}
Let $k$  and $W(k)$ be as before.
Let $n \geq 2$ and $d_1,\ldots,d_n \geq 3$. Define:
\begin{itemize}
\item[a)]
$A:=W(k)[|X_{i,j}~|~1 \leq i \leq n, 1 \leq j \leq d_i|]$,

\item[b)]
$I :=\displaystyle\bigcap_{1\leq i\leq n}
(X_{1,1},\ldots, X_{1,d_{1}-1},X_{2,1}, \ldots, X_{2,d_2-1}, \ldots, X_{i-1,d_{i-1}-1}, X_{i+1,1}, \ldots, X_{n,d_{n}-1} )  \subset A$

\item[c)]
$f \in I$ : a non-zero element.
\end{itemize}

Let $R := A/(p-f)$. Then the following assertions hold:
\begin{enumerate}
\item[i)]
$R$ is a ramified regular local ring of dimension $d:=\sum^n_{i=1}d_i$.

\item[ii)]
$\dim (R/\fq) \geq 3$ and $\ell_R(H^2_\fm(R/(pR+\fq)))=0<\infty$ for all $\fq \in \Min(R/IR)$.

\item[iii)]
$\Spec^{\circ}(R/IR)$ is connected.

\item[iv)]
$H^{d-1}_I(R) = 0$.
\end{enumerate}
\end{example}

\begin{proof}
i)
Due to the relation $p-f=0$, we know the maximal ideal of $R$ is generated by the set $\{X_{i,j}~|~1 \leq i \leq n, 1 \leq j \leq d_i\}$. Then as $$d\geq\mu(\fm)\geq \dim (R)=\dim(A)-1= d,$$ we have
$\mu(\fm)=\dim (R)$. In other words, 
$R$ is regular and of dimension $d$. Since  $p-f=0$ and $f\in\fm^2$, $R$ is ramified.

ii)
Since $f$ is part of monomials appearing in the generating set of $I$, we obtain an isomorphism $R/(pR+I) \cong R/IR$. Set 
$$
\fq_{i} := (X_{1,1},\ldots, X_{1,d_{1}-1},X_{2,1}, \ldots, X_{2, d_2-1}, \ldots, X_{i-1,d_{i-1}-1}, X_{i+1,1}, \ldots, X_{n, d_{n}-1} ) .
$$
Then
$$
\Min(R/IR)=\{ \fq_i  ~|~  1\leq i \leq n\}.
$$
Now, we deduce from
$$
R/pR+\fq_i  \cong R/\fq_i \cong k[| X_{1, d_{1}}, X_{2, d_2}, \ldots ,X_{i-1, d_{i-1}} X_{i, 1}, \ldots, X_{i, d_i},X_{i+1, d_{i+1}}, \ldots  ,X_{n, d_n}    |]
$$
that $H^{2}_{\fm}(R/pR+\fq_{i})=0$ for any $\fq_{i} \in \Min(R/IR)$, because $\dim (R/pR + \fq_{i}) = \dim (R/\fq_{i}) \geq 3$ and it is regular. In particular, it is of finite length.

iii)
For any distinct two primes $\fq_i, \fq_j \in \Min(R/I)$, it is obvious that $\fq_i+\fq_j \neq \fm$.
This implies that the graph $\Theta_{R/IR}$ is connected, hence $\Spec^\circ(R/IR)$ is connected.

iv)
Apply part iii) along with Theorem \ref{svt} to deduce $H^{d-1}_I(R)  = 0$.
\end{proof}


\begin{example}
\label{nonconnectedgraph} 
Let $k$  and $W(k)$ be as before.
Let $n \geq 2$ and $d_1,\ldots,d_n \geq 3$. Define:
\begin{itemize}
\item[a)]
$A:=W(k)[|X_{i,j}~|~1 \leq i \leq n, 1 \leq j \leq d_i|]$,

\item[b)]
$I:=\displaystyle\bigcap_{1\leq i \leq n}
(X_{1,1},\ldots, X_{1,d_{1}},X_{2, 1}, \ldots, X_{2, d_2}, \ldots, X_{i-1, d_{i-1}}, X_{i+1,1}, \ldots, X_{n,d_{n}} )  \subset A$


\item[c)]
$f \in I$ : a non-zero element.
\end{itemize}

Let $R := A/(p-f)$. Then the following assertions hold:
\begin{enumerate}
\item[i)]
$R$ is a ramified regular local ring and of dimension $d:=\sum^n_{i=1}d_i$.

\item[ii)]
$\dim (R/\fq) \geq 3$ and $\ell_R(H^2_\fm(R/(pR+\fq)))=0<\infty$ for all $\fq \in \Min(R/IR)$.

\item[iii)]
$\Spec^{\circ}(R/IR)$ is not connected.

\item[iv)]
$H^{d-1}_I(R) \neq 0$.
\end{enumerate}

\begin{proof}
i)
This follows from the similar reason as in the part i) of Example \ref{connectedexam2}.

ii)
Since $f$ is part of monomials appearing in the generating set of $I$, we obtain the isomorphisms $R/(pR+I) \cong R/IR$. 
Let $$\fq_{i} := (X_{1,1},\ldots, X_{1,d_{1}},X_{2, 1}, \ldots, X_{2, d_2}, \ldots, X_{i-1, d_{i-1}}, X_{i+1,1}, \ldots, X_{n,d_{n}} ),$$ and recall that
$$
\Min(R/IR)=\{ \fq_i  ~|~  1\leq i \leq n\}.
$$
Then we deduce from
$$
R/(pR+\fq_{i}) \cong R/\fq_{i} \cong k[|   X_{i ,1}, \ldots, X_{{i}, d_{i}}     |]
$$
that $H^{2}_{\fm}(R/pR+\fq_{i})=0$ for any $\fq_{i} \in \Min(R/IR)$, because $\dim( R/pR+\fq_{i}) \geq 3$ and it is regular. In particular, it is of finite length.
Moreover, we obtain $\dim (R/\fq_{i}) \geq 3$.

iii)
For any distinct two primes $\fq_i, \fq_j \in \Min(R/IR)$, it is obvious that $\fq_i+\fq_j = \fm$.
This implies that the graph $\Theta_{R/IR}$ is not connected, hence $\Spec^\circ(R/IR)$ is not connected.

iv)
Apply part iii) along with Theorem \ref{svt} to deduce $H^{d-1}_I(R)  \neq 0$.
\end{proof}
\end{example}

\begin{remark}
Adopt the notation of Example \ref{nonconnectedgraph}. 
Suppose that $n=2$, $d_1= d_2 = 4$ and $f= x_{1i}x_{2j}$ for some $1 \leq i, j \leq 4$.

\begin{itemize}
\item[i)]
We claim that $H^{d-1}_I(R)$ is the injective envelop of $k$. In particular, $H^{d-1}_I(R) \neq 0$.

\item[ii)]
Let $R$ be an analytically unramified quasi-Gorenstein local ring of dimension $d$ together with an ideal $I$ such that  $\Spec^{\circ}(R/I)$ is not connected. Then we claim that $\Ann_R(H^{d-1}_I(R))=0$. In particular, $H^{d-1}_I(R)\neq0$.\footnote{This part is valid without any use of quasi-Gorenstein assumption. Indeed, apply the Grothendieck's non-vanishing theorem along with the displayed exact sequence.}
\end{itemize}

\begin{proof}
i): Recall from the relation $p-x_iy_j=0$ that $\fq_1+\fq_2=\fm$. Also as $R$ is regular, we have $H^{d}_\fm(R)\cong E_R(k)$. By $(\ref{longcoh3})$, we deduce the following exact sequence
$$
0=H^{7}_{\fq_1}(R)\oplus H^{7}_{\fq_2}(R)\lo H^{7}_I(R)\lo H^{8}_{\fm}(R)\lo H^{8}_{\fq_1}(R)\oplus H^{8}_{\fq_2}(R)=0,
$$and the desired claim follows.
	
ii): There are two ideals $J_1,J_2$ of $R$ such that $\sqrt{J_1+J_2} = \fm$ and $\sqrt{J_1 \cap J_2} = \sqrt{I}$, but $\sqrt{J_1}$ and $\sqrt{J_2}$ are not equal to both $\fm$ and $\sqrt{I}$.  We apply the Hartshorne-Lichtenbaum vanishing theorem along with $(\ref{longcoh3})$ to get
$$
\cdots \lo H^{d-1}_{I}(R) \lo H^{d}_{\fm}(R) \lo H^{d}_{J_1}(R) \oplus H^{d}_{J_2}(R) =0.
$$
Recall that $H^{d}_\fm(R)\cong E_R(k)$. If $r\in\Ann(H^{d-1}_I(R))$, then $r$ annihilates any homomorphic image of $H^{d-1}_I(R)$, e.g., $r\in \Ann (E_R(k))=0$.
\end{proof}	
\end{remark}



\begin{example}
\label{32}
Let $k$ and $W(k)$ be as before. Define $R:=W(k)[|X_{i}~|~1 \leq i \leq 6|]$,  $I:=(X_1,X_2)\cap(X_3,X_4)\cap(X_5,X_6)$ and $A:=R/I$. Then the following assertions hold:
\begin{enumerate}
\item[i)]
$R$ is an unramified regular local ring and of dimension $7$.

\item[ii)]
$\Spec^{\circ}(A)$ is connected.

\item[iii)]
$H^{6}_I(R) = 0$.
\end{enumerate}
\end{example}

\begin{proof}
i): This is easy.
	
ii): This follows from Hartshorne's criteria, by showing that $\depth(A)\neq 1$. Instead, we show it more directly. Indeed, since $\Min(A)$ is equal to $\{(x_1,x_2),(x_3,x_4),(x_5,x_6)\}$ and for example $(x_1,x_2)+(x_3,x_4)$ is not primary to the maximal ideal, $\Theta_A$ is connected. In view of Lemma \ref{graph1}, $\Spec^{\circ}(A)$ is connected.

iii): Apply ii) along with Zhang's result.
\end{proof}

\subsection{An application of Zhang's result to calculating the invariant $q_{I}(R)$}

Finally, we apply Zhang's result ((SVT) for the unramified case) to calculate the cohomological dimension and the invariant $q_{I}(R)$, which we define below. Note that Proposition \ref{cohd} is the analogue of \cite[Proposition 3.1]{Var13}, and Proposition \ref{artininvariant} is the analogue of \cite[Proposition 3.2]{Var13} in mixed characteristic.

\begin{proposition}
\label{cohd}
Let $(R,\fm,k)$ be a $d$-dimensional unramified regular local ring of mixed characteristic, and assume that $I \subset R$ is a proper ideal with $\depth(R/I) \ge 2$. Then $H^{d-1}_I(R) = 0$. Suppose in addition that $R/I$ is 2-dimensional. Then $H^{d-2}_I(R) \neq 0$.
\end{proposition}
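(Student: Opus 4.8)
The plan is to establish the two assertions separately: the vanishing $H^{d-1}_I(R)=0$ will be deduced from the second vanishing theorem for unramified regular local rings \cite{Zha21}, while the nonvanishing $H^{d-2}_I(R)\neq 0$ (when $\dim(R/I)=2$) will follow from the fact that a regular local ring is Cohen--Macaulay, so that grade equals height and equals the first nonvanishing degree of local cohomology. For the first assertion I would first reduce to the setting in which Zhang's theorem applies in the form I need. Passing to the completion is harmless: $\widehat R$ is again a complete unramified regular local ring of mixed characteristic, $\depth(\widehat R/I\widehat R)=\depth(R/I)\ge 2$, and $H^{d-1}_I(R)=0$ if and only if $H^{d-1}_{I\widehat R}(\widehat R)=0$ by faithfully flat base change. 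Then I would enlarge the residue field by a faithfully flat local base change $\widehat R\to R'$ onto a complete unramified regular local ring of mixed characteristic with separably closed residue field --- concretely $R'=V'[[x_1,\dots,x_{d-1}]]$, where $V'$ is a complete DVR unramified over the coefficient ring of $\widehat R$ and having residue field $k^{\mathrm{sep}}$; since the closed fibre of $\widehat R\to R'$ is a field we get $\depth(R'/IR')=\depth(R/I)\ge 2$, and it suffices to prove $H^{d-1}_{IR'}(R')=0$. Now $\depth(R'/IR')\ge 2$ forces, via Hartshorne's connectedness theorem, that $\Spec^{\circ}(R'/IR')$ is connected, and every minimal prime of $R'/IR'$ has dimension $\ge\depth(R'/IR')\ge 2$; hence Zhang's (SVT) gives $H^{d-1}_{IR'}(R')=0$, and $H^{d-1}_I(R)=0$ follows by descent along the faithfully flat maps.

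For the second assertion, assume in addition $\dim(R/I)=2$. Then $2\le\depth(R/I)\le\dim(R/I)=2$, so $\depth(R/I)=\dim(R/I)=2$ and in particular $R/I$ is Cohen--Macaulay. Since $R$ is a Cohen--Macaulay local ring we have $\Ht(I)+\dim(R/I)=d$, hence $\Ht(I)=d-2$, and again because $R$ is Cohen--Macaulay $\grade(I,R)=\Ht(I)=d-2$. Using the standard identity $\grade(I,R)=\inf\{\,i:H^i_I(R)\neq 0\,\}$ valid for a proper ideal, we conclude $H^{d-2}_I(R)=H^{\grade(I,R)}_I(R)\neq 0$; combined with the first assertion this records that $\cd(I,R)=d-2$ in this case.

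I expect no serious obstacle once \cite{Zha21} is granted. The point requiring real care is the chain of faithfully flat reductions in the first assertion: one must check that completing and enlarging the residue field preserve $\depth(R/I)$ --- so that Hartshorne's connectedness theorem, and with it the hypothesis of (SVT), stays in force --- and that the vanishing of $H^{d-1}$ descends along faithfully flat maps. Trying to bypass (SVT) altogether would be the genuine difficulty, since $\cd(I,R)$ is not controlled by $\depth(R/I)$ alone --- determinantal ideals already show that $\cd(I,R)$ can exceed $d-\depth(R/I)$ --- so the second vanishing theorem is doing essential work.
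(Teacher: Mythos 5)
Your argument is correct and follows essentially the same route as the paper: reduce to the complete case, invoke Hartshorne's connectedness criterion to see that $\depth(R/I)\ge 2$ forces $\Spec^{\circ}(R/I)$ to be connected, apply Zhang's second vanishing theorem for the vanishing of $H^{d-1}_I(R)$, and use $\grade(I,R)=\Ht(I)=d-2$ together with the cohomological characterization of grade for the nonvanishing. The only difference is your extra faithfully flat passage to a separably closed residue field, a technical precaution the paper omits when citing Zhang's result; it is harmless and, depending on the exact form of the theorem being quoted, arguably more careful.
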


\begin{proof}
Since both depth and cohomological dimension behave well with respect to
completion, we may assume in addition that $R$ is complete. By Hartshorne's criteria \cite[Proposistion 2.1]{Har62}, $\Spec^\circ(R/I)$ is connected with respect to the Zariski topology. We apply this along with Zhang's result \cite[Theorem 1.4]{Zha21} to conclude that $H^{d-1}_I(R)=0$. From this, we have $\cd(I,R)\leq d-2$.

Now, suppose $R/I$ is $2$-dimensional.  By the catenary and the Cohen-Macaulay properties of $R$, we have $\grade(I,R)=\Ht(I,R)= d-2$. In view of
the cohomological characterization of grade, one has $H^{d-2}_I(R) \neq 0$.
\end{proof}


In his study of  $(SVT)$, Hartshorne \cite{HV} has introduced the invariant
$q_{I}(R)$ as the greatest integer $i$ such that $H^{i}_{I}(R)$ is not artinian. It is difficult to compute $q_{I}(R)$. Here, we present a sample, but we have no data about the cohomological dimension. In order to prove Proposition \ref{artininvariant}, we need the following lemma which was originally proved independently by   Auslander \cite[Lemma 3.4]{Aus}:

\begin{lemma}
\label{unramifiedlocalization}
Let $(R,\fm,k)$ be an unramified regular local ring of mixed characteristic and let $\fp$ be a prime ideal of $R$ such that $p \in \fp$. Then $R_\fp$ is also unramified.
\end{lemma}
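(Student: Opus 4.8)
Recall that a regular local ring of mixed characteristic is \emph{unramified} precisely when $p \notin \fm^2$. Under this hypothesis $p$ is part of a regular system of parameters of $R$, so $R/pR$ is again a regular local ring; moreover $p$ is a nonzerodivisor because $R$ is a domain. The goal is to show that the localization $R_\fp$ (which is regular, being a localization of a regular ring) again satisfies $p \notin \fp^2 R_\fp$. Write $n := \dim R_\fp$.

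First I would pass to $R/pR$. Since $p \in \fp$, the ideal $\fp/pR$ is a prime of the regular local ring $R/pR$, hence
$$
R_\fp/pR_\fp \;\cong\; (R/pR)_{\fp/pR}
$$
is a regular local ring. Because $p$ is a nonzerodivisor in $R_\fp$ and $p \in \fp R_\fp$ is a non-unit, Krull's principal ideal theorem gives $\Ht(pR_\fp) = 1$, and since $R_\fp$ is Cohen-Macaulay (hence catenary) this yields $\dim R_\fp/pR_\fp = n - 1$. Thus the maximal ideal $\fp(R_\fp/pR_\fp)$ is generated by $n-1$ elements forming a regular system of parameters of $R_\fp/pR_\fp$.

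Next I would compare cotangent spaces. Lift a regular system of parameters of $R_\fp/pR_\fp$ to elements $y_1,\dots,y_{n-1} \in \fp R_\fp$; then $\fp R_\fp = (p, y_1,\dots,y_{n-1})$. There is a natural surjection of $k(\fp)$-vector spaces
$$
\fp R_\fp / \fp^2 R_\fp \;\twoheadrightarrow\; \fp(R_\fp/pR_\fp)\big/\fp^2(R_\fp/pR_\fp),
$$
whose kernel is spanned by the image $\bar p$ of $p$. The source has dimension $n$ (as $R_\fp$ is regular of dimension $n$) and the target has dimension $n-1$, so the kernel is at least one-dimensional; being spanned by the single element $\bar p$, it is exactly one-dimensional, forcing $\bar p \neq 0$. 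Hence $p \notin \fp^2 R_\fp$, i.e.\ $R_\fp$ is unramified.

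I do not expect a serious obstacle here: the content is the dimension bookkeeping. The one point to handle with care is the drop $\dim R_\fp/pR_\fp = \dim R_\fp - 1$, which relies on $p$ being a nonzerodivisor and a non-unit in $R_\fp$ together with the catenarity (equivalently, Cohen-Macaulayness) of $R_\fp$; note also that no completeness hypothesis is needed at any stage, so the statement holds in the generality stated. (Auslander's original argument \cite[Lemma 3.4]{Aus} proceeds along the same lines.)
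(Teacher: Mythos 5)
Your argument is correct and follows the same route as the paper: identify $R_\fp/pR_\fp$ with a localization of the regular ring $R/pR$ to see it is regular, then deduce $p \notin \fp^2 R_\fp$ from the dimension drop. The paper states this last implication in one line, whereas you spell out the cotangent-space bookkeeping; the content is identical.
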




\begin{proof}
Since we have an isomorphism $R_\fp/pR_\fp \cong (R/pR)_\fp$ and the localization of a regular local ring at a prime ideal is regular, we have that $R_\fp/pR_\fp$ is regular, and this implies that $p \in \fp R_\fp \backslash \fp^2 R_\fp$.
\end{proof}

\begin{remark}
The analogous result of Lemma \ref{unramifiedlocalization} is not necessarily true for ramified regular local rings. For example, put $R := C(k)[|X,Y,Z|]/(p-XY)$ and $A := R/pR = k[|X,Y,Z|]/(XY)$. Then $R$ is a ramified regular local ring.
Taking $\fp := (Y,Z) \subset R$, we obtain
$$
R_\fp/pR_\fp \cong A_\fp \cong k[|X,Y,Z|]_\fp/(XY) \cong k[|X,Y,Z|]_\fp/(Y).
$$
Since the last item is regular, we find that $p$ is a regular element on $R_\fp$. Hence $R_\fp$ is unramified.
\end{remark}

See \cite{Ion15} for the definition of almost Cohen-Macaulay rings.

\begin{proposition}
\label{artininvariant}
Let $(R,\fm,k)$ be a $d$-dimensional unramified regular local ring, and let  $\fp\in\Spec(R)$ be such that $R/\fp$ is a $3$-dimensional generalized and almost  Cohen-Macaulay ring. Then $q_\fp(R)=d-3$.
\end{proposition}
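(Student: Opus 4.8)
The plan is to bound $q_\fp(R)$ from above and below: for the upper bound I will show that $H^i_\fp(R)$ is Artinian for every $i>d-3$, and for the lower bound that $H^{d-3}_\fp(R)$ is nonzero and not Artinian. Since $R$ is regular, hence Cohen--Macaulay, $\grade(\fp,R)=\Ht\fp=d-3$, so $H^i_\fp(R)=0$ for $i<d-3$ and $H^{d-3}_\fp(R)\neq 0$; thus only the range $d-3\le i\le d$ is relevant. Throughout I use that passing between a vanishing (resp.\ Artinianness) statement over $R$ and over $\widehat{R}$ — or over $R_\fq$ and $\widehat{R_\fq}$ — is harmless, by faithful flatness of completion and the fact that Artinian modules are $\fm$-torsion. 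Also note $\widehat{R/\fp}$ inherits the hypotheses: it is $3$-dimensional, generalized Cohen--Macaulay (finite length of lower local cohomology is preserved), and of depth $\ge 2$.

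I first dispose of the top two degrees. By the Hartshorne--Lichtenbaum vanishing theorem $H^d_\fp(R)=0$, because $\dim(R/\fp)=3>0$. For $H^{d-1}_\fp(R)$ I invoke Zhang's second vanishing theorem \cite{Zha21} for the complete unramified regular local ring $\widehat{R}$: it yields $H^{d-1}_{\fp\widehat{R}}(\widehat{R})=0$ provided $\Spec^\circ(\widehat{R/\fp})$ is connected, and the latter holds because $\depth(\widehat{R/\fp})=\depth(R/\fp)\ge\dim(R/\fp)-1=2$ by the almost Cohen--Macaulay hypothesis, so that $\widehat{R/\fp}$ satisfies Serre's condition $(S_2)$ and therefore has connected punctured spectrum (cf.\ \cite{Har62}). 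Hence $H^{d-1}_\fp(R)=0$, and the only possibly nonzero local cohomology modules are $H^{d-3}_\fp(R)$ and $H^{d-2}_\fp(R)$.

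The heart of the argument is that $H^{d-2}_\fp(R)$ is Artinian. I will first show $\Supp H^{d-2}_\fp(R)\subseteq\{\fm\}$, i.e.\ that $(H^{d-2}_\fp(R))_\fq=H^{d-2}_{\fp R_\fq}(R_\fq)=0$ for every prime $\fq\neq\fm$ (the case $\fp\not\subseteq\fq$ being trivial). Writing $n:=\Ht\fq$, catenarity of $R$ together with the equidimensionality of $\widehat{R/\fp}$ (valid since $R/\fp$ is generalized Cohen--Macaulay) gives $\dim(R_\fq/\fp R_\fq)=n-(d-3)\in\{0,1,2\}$. If this dimension is $0$, then $\dim R_\fq=d-3<d-2$ and Grothendieck's vanishing theorem applies; if it is $1$, then $d-2=\dim R_\fq$ and Hartshorne--Lichtenbaum applies since $\dim(R_\fq/\fp R_\fq)>0$ and $\widehat{R_\fq}$ is a domain; if it is $2$, then $d-2=\dim R_\fq-1$, and I pass to $\widehat{R_\fq}$, a complete regular local ring — unramified of mixed characteristic by Lemma \ref{unramifiedlocalization} if $p\in\fq$, and of equal characteristic $0$ if $p\notin\fq$ — and apply the known cases of the second vanishing theorem (\cite{HL90}, \cite{Zha21}). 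This last step is where the generalized Cohen--Macaulay hypothesis is essential: a localization of a generalized Cohen--Macaulay ring at a non-maximal prime is Cohen--Macaulay, so $(R/\fp)_\fq$ is Cohen--Macaulay of dimension $2$, hence $\widehat{R_\fq}/\fp\widehat{R_\fq}$ is Cohen--Macaulay of dimension $2$, so of depth $2$, so it has connected punctured spectrum, and (SVT) gives the vanishing. Once $\Supp H^{d-2}_\fp(R)\subseteq\{\fm\}$ is established, Artinianness follows because the Bass numbers of $H^i_\fp(R)$ are finite (Huneke--Sharp \cite{HS93} in characteristic $p$, Lyubeznik \cite{Lyu93} in characteristic $0$, and Lyubeznik \cite{Lyu06}, \cite{mixedlyu} for unramified rings of mixed characteristic): a module supported only at $\fm$ with finite-dimensional socle embeds into a finite direct sum of copies of $E_R(k)$, hence is Artinian. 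Therefore $q_\fp(R)\le d-3$.

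For the reverse inequality, $H^{d-3}_\fp(R)\neq 0$ as noted, and localizing at $\fp$ gives $(H^{d-3}_\fp(R))_\fp\cong H^{\dim R_\fp}_{\fp R_\fp}(R_\fp)\neq 0$ by Grothendieck's non-vanishing theorem, since $\dim R_\fp=\Ht\fp=d-3$. As $\fp\neq\fm$ (because $\dim(R/\fp)=3$), the support of $H^{d-3}_\fp(R)$ is not contained in $\{\fm\}$, so $H^{d-3}_\fp(R)$ is not Artinian. Combining the two bounds gives $q_\fp(R)=d-3$. The step I expect to be most delicate is the support computation for $H^{d-2}_\fp(R)$ — precisely, ensuring that the relevant completed localizations of $R/\fp$ have connected punctured spectra, so that the known cases of the second vanishing theorem apply; the generalized and almost Cohen--Macaulay hypotheses are tailored exactly to supply this.
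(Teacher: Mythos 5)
Your proposal is correct and follows essentially the same route as the paper: $\cd(\fp,R)\le d-2$ via depth $\ge 2$ and the second vanishing theorem, then a case analysis on $\Ht\fq$ (Grothendieck vanishing, Hartshorne--Lichtenbaum, and SVT over the localizations, which are unramified by Lemma \ref{unramifiedlocalization} or of equal characteristic) to show $\Supp H^{d-2}_\fp(R)\subseteq\{\fm\}$, Artinianness from finiteness of Bass numbers, and Grothendieck non-vanishing at $\fp$ for the lower bound. The only quibble is your parenthetical that depth $\ge 2$ gives $(S_2)$ --- it does not, but Hartshorne's criterion needs only depth $\ge 2$ for connectedness, so the argument stands.
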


\begin{proof}
Since $R/ \fp$ is almost Cohen-Macaulay, we have $\depth(R/ \fp) \ge 2$.  Due to 
Proposition \ref{cohd} we know that  $\cd(I,R)\leq d-2$. It follows from the almost Cohen-Macaulay assumption that $R/\fp$ is Cohen-Macaulay over the punctured spectrum. By Grothendieck's vanishing theorem, $\fp\not\in\Supp(H^{d-2}_\fp(R))$.
Let $Q\in \V(\fp)$ be of height $d-2$. In view of the Hartshorne-Lichtenbaum vanishing theorem, $Q\not\in\Supp(H^{d-2}_\fp(R))$. Now, let $Q\in \V(\fp)$ be of height $d-1$.  We are going to show \begin{equation}
\label{ineq2}
 \cd(\fp R_Q,R_Q)\leq \dim(R_Q)-2=d-3. 
\end{equation}

 If $p \in Q$, then $R_Q$ is unramified by Lemma \ref{unramifiedlocalization}. Recall, from the generalized   Cohen-Macaulay assumption, that $R_Q/\fp R_Q$  is Cohen-Macaulay. So its depth is two. By another application of  Proposition \ref{cohd}, we  observe that $\cd(\fp R_Q,R_Q)\leq \dim(R_Q)-2=d-3.$ 
 If $p \notin Q$, then $R_Q$ is a regular local ring of equal characteristic $0$. From the above discussion and \cite[Proposition 3.1]{Var13}, we also  deduce (\ref{ineq2}). 
		According to (\ref{ineq2}), we conclude that  $Q\notin\Supp(H^{d-2}_\fp(R))$.   This implies that $\Supp(H^{d-2}_\fp(R))\subset \{\fm\}$. Recall that Bass numbers of local cohomology modules are finite by \cite[Theorem 1]{Lyu00}. Since $H^{d-2}_\fp(R)$ is supported at $\fm$ and its socle is finite, we deduce that  $H^{d-2}_\fp(R)$ is artinian. In sum, $H^{i}_\fp(R)$  is artinian for all $i>d-3$. In the light of Grothendieck's non-vanishing theorem, we know $H^{d-3}_\fp(R)_{\fp}\neq 0$, i.e., $\fp \in\Supp(H^{d-3}_\fp(R))$. From this we conclude that $q_\fp(R)= d-3$.
\end{proof}

\begin{corollary}
\label{qmod}
Adopt the notation of Proposition \ref{artininvariant} and let $M$ be a finitely generated $R$-module. Then $H^{i}_\fp(M)$ is artinian for all $i>d-3$.
\end{corollary}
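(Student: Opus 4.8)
The plan is to reduce the general finitely generated case to the already-treated case $M=R$ by means of a free presentation. Two inputs from the preceding material are used: the cohomological dimension bound $\cd(\fp,R)\le d-2$, recorded in the proof of Proposition \ref{artininvariant} (it comes from Proposition \ref{cohd}, applicable here because an almost Cohen-Macaulay ring of dimension $3$ has depth at least $2$, so $\depth(R/\fp)\ge 2$), and the fact that $H^{d-2}_\fp(R)$ is artinian, which is precisely the assertion $q_\fp(R)=d-3$ proved in Proposition \ref{artininvariant}.

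First I would recall the standard fact that $\cd(\fp,M)\le\cd(\fp,R)$ for every finitely generated $R$-module $M$. This is a downward induction on the cohomological degree: Grothendieck's vanishing theorem makes the induction start, and the inductive step uses a short exact sequence $0\to K\to R^{n}\to M\to 0$ with $K$ finitely generated (here $R$ Noetherian is what guarantees that $K$ is finitely generated) together with the long exact sequence $(\ref{longcoh1})$. In particular $H^{i}_\fp(M)=0$ for all $i>d-2$, so the only degree left to analyze is $i=d-2$, and this will cover all $i>d-3$ as required.

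Now fix a finitely generated $M$ and choose a presentation $0\to K\to R^{n}\to M\to 0$ with $K$ finitely generated. Feeding it into $(\ref{longcoh1})$ yields the exact sequence
$$
H^{d-2}_\fp(R^{n})\lo H^{d-2}_\fp(M)\lo H^{d-1}_\fp(K),
$$
and $H^{d-1}_\fp(K)=0$ by the bound $\cd(\fp,K)\le\cd(\fp,R)\le d-2$ from the previous step. Hence $H^{d-2}_\fp(M)$ is a homomorphic image of $H^{d-2}_\fp(R^{n})\cong H^{d-2}_\fp(R)^{\oplus n}$, a finite direct sum of copies of the artinian module $H^{d-2}_\fp(R)$; such a direct sum is artinian, and a quotient of an artinian module is artinian, so $H^{d-2}_\fp(M)$ is artinian. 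Combined with the vanishing in degrees $>d-2$, this shows that $H^{i}_\fp(M)$ is artinian for all $i>d-3$. I do not anticipate a genuine obstacle: the only point needing a little care is the lemma $\cd(\fp,M)\le\cd(\fp,R)$ for finitely generated $M$ (and the attendant bookkeeping that the kernels appearing are finitely generated); everything else is a single diagram chase plus the two elementary closure properties of the class of artinian modules.
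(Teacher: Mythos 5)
Your argument is correct, and it is a close cousin of the paper's proof but organized differently. The paper disposes of the corollary by induction on $n:=\pd(M)$ (finite because $R$ is regular): for $\pd(M)>0$ one takes $0\to K\to R^{m}\to M\to 0$ with $\pd(K)=\pd(M)-1$ and reads off from $(\ref{longcoh1})$ that $H^{i}_\fp(M)$ is an extension of a submodule of $H^{i+1}_\fp(K)$ by a quotient of $H^{i}_\fp(R^{m})$, both artinian, for every $i>d-3$. You instead first invoke the standard bound $\cd(\fp,M)\le\cd(\fp,R)\le d-2$ for finitely generated $M$ (itself a descending induction over all finitely generated modules, correctly sketched), which reduces everything to the single degree $i=d-2$, where one presentation exhibits $H^{d-2}_\fp(M)$ as a quotient of the artinian module $H^{d-2}_\fp(R)^{\oplus n}$. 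Your route does not use finiteness of projective dimension, needs only closure of artinian modules under quotients and finite direct sums (rather than also under extensions), and delivers the slightly sharper statement that $H^{i}_\fp(M)=0$ for $i>d-2$, so that only $H^{d-2}_\fp(M)$ can be a nonzero artinian module; the paper's induction is shorter to state but relies on regularity to terminate. Both arguments consume exactly the same inputs from Propositions \ref{cohd} and \ref{artininvariant}.
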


\begin{proof}
This follows by a routine induction on $n:=\pd(M)$ which is finite by the regularity assumption of $R$.	
\end{proof}

\begin{corollary}
Let $(R,\fm,k)$ be a $d$-dimensional unramified regular local ring, and let $\fp\in\Spec(R)$. If $R/\fp$ is normal and of dimension three, then $q_\fp(R)= d-3$.
\end{corollary}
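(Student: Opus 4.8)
The plan is to deduce the statement directly from Proposition \ref{artininvariant}: I claim that a three-dimensional normal quotient $A:=R/\fp$ automatically satisfies the hypotheses of that proposition, that is, $A$ is a three-dimensional generalized and almost Cohen--Macaulay ring, and then the conclusion $q_\fp(R)=d-3$ is immediate.

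First I would unpack what normality gives. Since $A$ is a normal local domain, Serre's criterion says $A$ satisfies $(R_1)$ and $(S_2)$. Using $(S_2)$ at the maximal ideal $\fm_A$ gives $\depth A\ge \min\{2,\dim A\}=2=\dim A-1$, so $A$ is almost Cohen--Macaulay. Using $(S_2)$ at a non-maximal prime $\fQ$: since $A$ is a domain and a homomorphic image of the regular — hence Cohen--Macaulay and universally catenary — local ring $R$, it is catenary and equidimensional with $\dim A/\fQ+\Ht_A\fQ=\dim A$, so $\fQ\ne\fm_A$ forces $\dim A_\fQ\le \dim A-1=2$; hence $(S_2)$ yields $\depth A_\fQ\ge\min\{2,\dim A_\fQ\}=\dim A_\fQ$, i.e. $A_\fQ$ is Cohen--Macaulay. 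Thus $A$ is Cohen--Macaulay on its punctured spectrum. Combining this last fact with the catenary/equidimensional structure of $A$, the standard characterization of generalized Cohen--Macaulay rings for rings admitting a dualizing complex (equivalently, quotients of regular local rings) shows $A$ is generalized Cohen--Macaulay.

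With these properties verified, $A=R/\fp$ meets every hypothesis of Proposition \ref{artininvariant}, so $q_\fp(R)=d-3$, as claimed. I do not expect a genuine obstacle here: the only delicate point is bookkeeping, namely matching the notions ``almost'' and ``generalized'' Cohen--Macaulay against the definitions used in the cited sources and invoking the correct characterization of generalized Cohen--Macaulayness. This is robust because the essential input actually used in the proof of Proposition \ref{artininvariant} — that $\depth(R/\fp)\ge 2$ and that $R/\fp$ is Cohen--Macaulay on the punctured spectrum — is supplied outright by $(S_2)$. I would also remark that the hypothesis $\dim(R/\fp)=3$ is essential: a normal local domain of dimension $\ge 4$ need not be generalized Cohen--Macaulay, since $(S_2)$ no longer forces all proper localizations to be Cohen--Macaulay.
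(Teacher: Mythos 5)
Your proposal is correct and follows essentially the same route as the paper: both deduce from normality (via Serre's criterion, $(S_2)$, and stability of normality under localization) that $R/\fp$ is almost Cohen--Macaulay and Cohen--Macaulay on the punctured spectrum, hence generalized Cohen--Macaulay, and then invoke Proposition \ref{artininvariant}. Your write-up is in fact somewhat more detailed than the paper's, spelling out the dimension bound $\dim A_\fQ\le 2$ for non-maximal primes.
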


\begin{proof}
Recall that a localization of a normal domain is again normal. Also, in the light of Serre's characterization of normality, we know that normal rings satisfy Serre's condition $(S_2)$. From these facts, $R/\fp$ is a generalized and almost Cohen-Macaulay ring. It remains to apply Proposition \ref{artininvariant}.
\end{proof}

\begin{acknowledgement}
The authors are grateful to Shunsuke Takagi, Ryo Takahashi, and Ken-ichi Yoshida for useful comments. We also thank the referee for reading the paper thoroughly and providing valuable comments.
\end{acknowledgement}

\end{document}